\newtheorem {theorem}{Theorem}[section]
\newtheorem {lemma}{Lemma}[section]
\newtheorem {remark}{Remark}[section]
\newtheorem {proposition}{Proposition}[section]
   \def\R{\mathbb R}
\def\n{\noindent}
\def\nhd{neighborhood}
\def\Om{\Omega}
\def\i{\sl{\bold i}}
\def\va{\varphi}
\def\ga{\gamma}
\def\la{\lambda}
\def\ve{\varepsilon}
\def\al{\alpha}
\def\bt{\beta}
\def\la{\lambda}
\def\fr{\frac}
\def\om{\omega}
\def\De{\Delta}
\def\de{\delta}
\def\al{\alpha}
\def\R{\mathbb R}
\def\vol{\text{vol}}
\def\ees{{\accent"5E e}\kern-.385em\raise.2ex\hbox{\char'23}\kern-.08em}
\def\EES{{\accent"5E E}\kern-.5em\raise.8ex\hbox{\char'23 }}
\def\ow{o\kern-.42em\raise.82ex\hbox{
\vrule width .12em height .0ex depth .075ex \kern-0.16em \char'56}\kern-.07em}
\def\OW{O\kern-.460em\raise1.36ex\hbox{
\vrule width .13em height .0ex depth .075ex \kern-0.16em \char'56}\kern-.07em}
\title{Volume estimates of sublevel sets of real polynomials}
\author{Nguyen Quang Dieu, Dau Hoang Hung, TI\EES N S\OW N PH\d{A}M, Hoang Thieu Anh}
\address{Hanoi National University of Education, Hanoi, Vietnam}
\email{dieu$\_$vn@yahoo.com}
\thanks{Corresponding author: Nguyen Quang Dieu (Ha Noi National University of Education)}
\address{Phan Boi Chau High school for Gifted students, Vinh, Nghe An, Vietnam}
\email{dhhungk9@yahoo.com}
\address{Department of Mathematics, University of Dalat, 1 Phu Dong Thien Vuong, Dalat, Vietnam}
\email{sonpt@dlu.edu.vn}
\address{Faculty of Fundamental Sciences, University of Transport and Communication, Hanoi, Vietnam}
\email{hoangthieuanh@gmail.com}
\subjclass{Primary 42B10; Secondary 26D10, 05D99}
\keywords{\L ojasiewicz inequality, real algebraic sets, oscillatory integrals, volume estimates}
\date{\today}
\begin{document}
\maketitle
\begin{abstract}
We give upper bounds for volume of sublevel sets of  real polynomials. Our method is to combine a version of global \L ojasiewicz inequality with some well known estimates on volume of tubes around real algebraic sets.
Some applications to oscillatory integrals and integration indices of real polynomial are also given.
\end{abstract}
\section{Introduction}

Let $P$ be a real polynomial on $\mathbb R^n$. For $\de \in \R$ we define the sublevel set
$V_\de:=\{x \in \R^n: \vert P(x)\vert \le \de\}.$
The main goal of this article is to study estimates from above for $\vol_n (V_\de \cap \Om),$ where $\Om$ is a bounded open  subset of $\R^n$ and $\vol_n$ is the $n-$dimensional Lebesgue measure of $\R^n$.
For technical reason, we only restrict ourself to the cases where $\Om=\De_r^n:=(-r, r)^n.$
In the case where $n=1,$ we have the following result which is a special case of \cite[Proposition 2.1]{Carbery1999}:
Let $P(x)=a_0x^d+\cdots$ be a polynomial in $\mathbb R$ of degree $d \ge 1.$ Then there exists a constant $C_d>0$ depending only on $d$ such that 
\begin{equation} \label{onedim}
\vol_1 \{\vert P(x)\vert \le \de\} \le C_d \frac{\de^{1/d}}{\vert a_0\vert^{1/d}}, \ \forall \de>0.
\end{equation}
Using (\ref{onedim}) and an inductive argument, we obtain the following generalization of the above estimate 
(see \cite[Theorem 7.1]{Carbery1999}): Let $P$ be a polynomial in $\mathbb R^n$ of degree $d \ge 1$ and satisfies
$\frac{\partial^{\vert \beta\vert} P}{\partial x^\bt} \ge 1$ on the cube $[0, 1]^n$ for some multi-index $\bt$. Then
there exists a constant $C_{d,n}>0$ depending only on $d, n$ such that 
\begin{equation} \label{highdim}
\vol_n \{x \in [0,1]^n: \vert P(x)\vert \le \de\} \le C_{d,n} \de^{1/\vert \bt\vert}, \ \forall \de>0.
\end{equation}
 The above result can be used in our problem as follows.
Write $P(x)=\sum_{\vert \al \vert \le d} a_\al x^\al$, then for some multi-index $\al=(\al_1, \cdots, \al_n)$ with 
$\vert \al\vert:=\vert \al_1\vert+\cdots+\vert a_n \vert=d$
and $a_\al \ne 0,$ we set
$$Q(y):= \frac1{(2r)^d\al!a_\al} P(2ry-r), y \in \R^n,$$
where $\al!=\al_1 !\cdots \al_n!.$ 
Then $Q$ is a polynomial of degree $d$ and $\frac{\partial^{\vert \al\vert} Q}{\partial y^\al}=1$ on $\R^n.$ 
So by (\ref{highdim}), we get
$$\vol_n \{y \in [0, 1]^n: \vert Q (y)\vert \le \de\} \le C_{d,n}\de^{1/d}, \ \forall \de>0.$$
By considering the change of variable $x:= 2ry-r$ we infer
\begin{equation}\label{CCW}
\vol_n (V_\de \cap \De_r^n) \le \fr{C'_{d,n}}{(\al! \vert a_\al\vert)^{1/d}}r^{n-1} \de^{1/d}, \ \forall \de>0
\end{equation}
There is also another route to bound $\vol_n (V_\de \cap \De_r^n)$ from above by using estimates of the sup-norm of $P$
over compact sets of $\mathbb R^n.$
This idea has been exploited in \cite{B-G1973} where the authors provide a multi-dimensional version of Remez's inequality for polynomial functions on $\R^n.$
More precisely, for  an open convex domain $\Om$ in $\mathbb R^n$ and a measurable subset $\om \Subset \Om$ we obtain
the following "doubling" inequality
(see Theorem 2 in \cite{B-G1973}, especially (8) in p. 350)
\begin{equation}
\Vert P \Vert_\Om \le \Big ( \frac{4n \vol_n (\Om)}{\vol_n (\om)}\Big )^d \Vert P \Vert_\om,
\end{equation}
where $d$ is the degree of $P$.
It follows (see \cite{B-G1973} p. 354) that  the volume of $\om:=V_\de \cap \Om$ may be estimated from above in terms of $\de, d, n$ and $\Vert P\Vert_\Om.$ 
Since the last quantity is somewhat imprecise, it is natural to have some bounds which depends only on $n$ and the polynomial $P$.

In this work, we approach the above problem in a different way. More precisely, we compare the sublevel set $V_\de$ with tubular \nhd s around the real algebraic set $Z(P):=\{x \in \R^n: P(x)=0\}.$
This is done with the help of  a global \L ojasiewicz inequality for polynomials (Lemma 3.1).
Since the volume of such a tube is known to be estimated in terms of the radius and the degree of the defining polynomials (cf. Theorem 2.1), we obtain some
upper bound for $\vol_n (V_\de \cap \De_r^n)$ (cf. Theorem 3.1).
We should say that similar (but weaker)  global \L ojasiewicz inequalities were obtained earlier (see \cite[Theorem 4.6]{Dinh2012-1}) and \cite[Theorem 2.1]{HNP2013}).
One novelty in our work is to provide a sharper \L ojasiewicz inequality where the notion of {\it admissible monomial} is taken into account (cf. Lemma 3.1).
The other main result is Theorem 3.2 where we employ the change of variable formula repeatedly to get  improvement in estimating $\vol_n (V_\de \cap \De_r^n)$ in certain circumstances.

Finally, we apply our volume estimates to give explicit lower bounds for integration indices of polynomials and finiteness of oscillatory integrals.
Recall that for a real polynomial $P$ in $\R^n$ with $P(0)=0$, the integration index $\i (P)$ is defined to be the supremum of all $t>0$ such that $\vert P\vert^{-t}$ is integrable on {\it some} \nhd\ of $0$.
By a standard rearrangement formula, we can see that the finiteness of $\int_{\De_r^n} \vert P\vert^{-t}d\la_n$ depends on how fast the volume of $\De_r^n \cap \{\vert P \vert \le \de\}$ decreases to $0$ as $\de \to 0$ (cf. Proposition 4.1 and Proposition 4.2).
Likewise, by a classical inequality of Van der Corput (cf. Lemma 4.3) we may bound the  integral of $e^{-i \lambda P}$ over the part where the partial derivatives of $P$ are bounded from below.
Then the oscillatory integral $\int_{\De_r^n} e^{-i \lambda P}d\la_n$ may be estimated from above (cf. Proposition 4.3)
by using the volume estimates for sublevel sets of partial derivatives of $P$.
\vskip0,6cm
\noindent
{\bf Acknowledgments.} We are grateful to the referee for his (her) useful comments that helped to improve our exposition significantly.
\section{Preliminaries}
Throughout this paper, ${\Bbb R}^n$ denotes Euclidean space of dimension $n.$
 The corresponding inner product (resp., norm) in ${\Bbb R}^n$  is defined by $\langle x, y \rangle$ for any $x, y \in {\Bbb R}^n$ (resp., $\| x \| :=
\sqrt{\langle x, x \rangle}$ for any $x \in {\Bbb R}^n$). For a subset $X \subset \R^n$ and $a \in \R^n$, we let
$d(a, X):= \inf_{x \in X} \Vert x-a\Vert.$
For $\de>0$, the tubular \nhd\ of $X$ with radius $\de$ is defined by
$X_\de:= \{x \in \Bbb R^n: d(x, X)<\de\}.$

We recall the following basic result of Wongkew \cite{Wongkew1993} on upper bound of tubular neighborhoods of real algebraic varieties.
\begin{theorem}
Let $m$ be the codimension of a real algebraic variety $Z \subset \R^n$ whose defining polynomials are all bounded in degree by $d$.
Let $S$ be an arbitrary ball of radius $r$ in $\R^n$. Then there exist positive constants $c_m, \cdots, c_n$ depending only on $n$ such that for all positive $\de$ we have
$$\vol_n (Z \cap S)_\de \le \sum_{j=m}^n c_jd^j\de^jr^{n-j}.$$
\end{theorem}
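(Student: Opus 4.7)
The plan is to stratify the variety $Z$ into smooth algebraic submanifolds of pure codimension $j\in\{m,m+1,\ldots,n\}$, to bound the $(n-j)$-dimensional volume of each stratum in $S$ by an integral-geometric argument (Cauchy--Crofton combined with B\'ezout), and then to dilate to a $\de$-tube by a covering estimate. Summing the resulting contributions in $j$ reproduces the displayed sum. All constants that arise depend only on $n$, since $m\le j\le n$.

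First I would define $Z^{(m)}:=Z\setminus Z_{\mathrm{sing}}$, the smooth locus of $Z$ in the sense of real algebraic geometry, and iterate on the singular locus. The singular locus is cut out by the defining polynomials of $Z$ together with the $m\times m$ minors of its Jacobian matrix, so it is again a real algebraic subvariety whose defining polynomials have degree bounded by a fixed polynomial in $d$. Iterating produces a disjoint decomposition $Z\cap S=\bigsqcup_{j=m}^n (Z^{(j)}\cap S)$ in which each $Z^{(j)}$ is a smooth real algebraic submanifold of pure codimension $j$. Since $(Z\cap S)_\de\subseteq \bigcup_{j=m}^n (Z^{(j)}\cap S)_\de$, it is enough to estimate each piece separately.

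For the intrinsic volume of $Z^{(j)}\cap S$ I would invoke the Cauchy--Crofton formula. A generic affine subspace of dimension $j$ meets $Z^{(j)}$ in at most $d^j$ real points, because the slice is cut out by $j$ polynomials of degree at most $d$ and so, by B\'ezout applied to the complexification, has at most $d^j$ complex zeros. Integrating this point count with respect to the translation-invariant measure on the affine Grassmannian, restricted to those $j$-planes that meet $S$ (a set of measure of order $r^{n-j}$), yields
$$\mathcal H^{n-j}(Z^{(j)}\cap S)\le c'_{n,j}\, d^j\, r^{n-j}.$$
A covering argument then produces the sought tube estimate: choose a maximal $\de$-separated net in $Z^{(j)}\cap S$, whose cardinality is bounded by a constant times $\mathcal H^{n-j}(Z^{(j)}\cap S)/\de^{n-j}$, and inflate each point to a ball of radius $2\de$ to cover the tube. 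This gives $\vol_n(Z^{(j)}\cap S)_\de \le c_j\, d^j\, r^{n-j}\, \de^j$, and summation over $j=m,\ldots,n$ finishes the proof.

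The step I expect to be the main obstacle is the passage from the intrinsic volume to the tube volume, because the Weyl tube formula only applies when $\de$ is smaller than the normal injectivity radius of the stratum, which is not controlled in the present generality. To bypass this, the packing count for maximal $\de$-separated sets has to be justified from the Cauchy--Crofton bound alone, without a Lipschitz parametrization; this is done by another integral-geometric comparison (of Federer type) between the counting measure of a $\de$-net and the $(n-j)$-dimensional Hausdorff measure, giving an estimate uniform in $\de>0$. Once this technical point is in place, the summation is immediate and the constants visibly depend only on $n$.
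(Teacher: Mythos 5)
The paper does not prove this statement: it is Theorem~2.1, quoted from Wongkew \cite{Wongkew1993} without argument, so there is no in-paper proof to compare against and I will assess your proposal on its own terms.

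Your overall strategy---stratify $Z$ into smooth pieces $Z^{(j)}$ of pure codimension $j$, bound their $(n-j)$-dimensional measure in $S$ by Cauchy--Crofton plus a B\'ezout-type count, then inflate to the tube by a packing argument---is the right family of ideas, and it is close in spirit to Wongkew's argument and to the semi-algebraic generalization of Yomdin--Comte (Theorem~5.9 of \cite{Yomdin2004}, already cited in the paper). But the step you yourself flag as the obstacle is the genuine gap, and the repair you sketch is not a valid lemma. You want $\#N\lesssim \mathcal H^{n-j}(Z^{(j)}\cap S)/\delta^{n-j}$ for a maximal $\delta$-separated net $N$; that would require a \emph{uniform} lower density bound $\mathcal H^{n-j}\bigl(Z^{(j)}\cap B(x,\delta/2)\bigr)\gtrsim\delta^{n-j}$ for every $x\in Z^{(j)}\cap S$ and every $\delta>0$, and this is simply false. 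Already for the degree-$2$ hypersurface $Z=\{x_1^2+x_2^2=\varepsilon^2\}$ in $\R^3$ with $0<\varepsilon\ll\delta<r$, one has $\mathcal H^2(Z\cap B(0,r))\sim\varepsilon r$, so your claimed net bound would give $\#N\lesssim \varepsilon r/\delta^2$ and hence $\vol_3\bigl((Z\cap S)_\delta\bigr)\lesssim \varepsilon r\delta$; but the actual tube volume is of order $\delta^2 r\gg\varepsilon r\delta$. So a ``Federer-type comparison between the counting measure of a $\delta$-net and $\mathcal H^{n-j}$'' cannot exist, and the argument as written would prove something false. The way out is to avoid Hausdorff measure entirely and bound the tube volume directly by Crofton-type quantities (the Vitushkin variations $V_i$), for which one has an inequality of the form $\vol_n(A_\delta)\le c(n)\sum_i V_i(A)\delta^{n-i}$ and for which the B\'ezout/Milnor--Thom count gives $V_{n-j}(Z\cap S)\lesssim_n d^jr^{n-j}$; note that in the cylinder example this works precisely because the Crofton upper bound on $\mathcal H^2$ is $\sim dr^2$, not the much smaller true area $\varepsilon r$.

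Two secondary points. First, a generic $j$-plane $L$ meets the whole of $Z$ in a set that typically contains positive-dimensional pieces coming from the strata of codimension $<j$, so the relevant count is the number of \emph{isolated} real solutions of a system of degree-$d$ equations in $j$ variables; this is not literally complex B\'ezout but a Milnor--Thom/Ole\u{\i}nik--Petrovsky bound, still $O_n(d^j)$, so harmless. Second, your stratification raises the degree of the defining equations of the lower strata (the minors have degree about $md$ and you iterate), which would spoil the exponent in $d$ if you applied Crofton to those equations; you should be explicit that the count on the slice $L\cap Z$ uses the \emph{original} degree-$d$ equations of $Z$, which dominate the intersections with $Z^{(j)}$, so that only the $n$-dependent constants are affected.
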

We should say that a variation of this result for semi-algebraic sets $Z$ was obtained earlier, see Theorem 5.9 in \cite{Yomdin2004}. Nevertheless, in this more general setting, the constants $c_m, \cdots, c_n$ depend also on
the so-called {\em diagram} of $Z$ (see Definition 4.9 in \cite{Yomdin2004}). The next result,  a rather straightforward consequence of Theorem 2.1, will play a key role in our work.

\begin{lemma}
Let $m$ be the codimension of a real algebraic variety $Z \subset \R^n$ whose defining polynomials are all bounded in degree by $d$.
Let $S$ be an arbitrary ball of radius $r$ in $\R^n$. Then there exists a positive constant $C$ depending only on $n$ such that for all positive $\de$ we have
$$\vol_n (Z_\de \cap \De_r^n) \le C((d\de)^m r^{n-m}+(d\de)^n).$$
\end{lemma}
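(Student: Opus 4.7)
The plan is to deduce the lemma from Theorem 2.1 by enclosing $\De_r^n$ in a ball and then simplifying the resulting polynomial bound. Specifically, $\De_r^n \subset B(0, r\sqrt n)$, and if $x \in Z_\de \cap \De_r^n$, then by definition there is $z \in Z$ with $\|x - z\| < \de$, so $\|z\| < r\sqrt n + \de$. Setting $S := B(0, r\sqrt n + \de)$, this shows $Z_\de \cap \De_r^n \subset (Z \cap S)_\de$, and Theorem 2.1 applied to $S$ gives
$$\vol_n(Z_\de \cap \De_r^n) \le \sum_{j=m}^n c_j\, d^j \de^j (r\sqrt n + \de)^{n-j}.$$

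Next, I would simplify the right-hand side. Using the elementary inequality $(a+b)^k \le 2^k(a^k + b^k)$, the factor $(r\sqrt n + \de)^{n-j}$ is bounded by a constant depending only on $n$ times $r^{n-j} + \de^{n-j}$. Each summand then splits as a constant multiple of $(d\de)^j r^{n-j}$ plus a constant multiple of $d^j \de^n$. Assuming $d \ge 1$ (otherwise $Z$ is empty or all of $\R^n$ and the lemma is trivial), the second kind of term is bounded by $(d\de)^n$.

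For the first kind of term, the key observation is that the function $j \mapsto (d\de)^j r^{n-j} = r^n (d\de/r)^j$ is monotone in $j$, so its maximum over $m \le j \le n$ is attained at one of the two endpoints; hence $(d\de)^j r^{n-j} \le (d\de)^m r^{n-m} + (d\de)^n$ for every $j$ in this range. Summing over the finitely many values of $j$ and absorbing all $n$-dependent constants into a single $C$ yields the desired inequality.

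The only mildly delicate point is that Theorem 2.1 is stated for a ball rather than a cube, forcing the enlargement of the enclosing ball from radius $r\sqrt n$ to $r\sqrt n + \de$ in order to capture all nearby points of $Z$; this enlargement is precisely what produces the $(d\de)^n$ correction term in the final bound, so it is unavoidable rather than a loss. Apart from this, the argument is a direct computation combining Theorem 2.1 with the two elementary inequalities above.
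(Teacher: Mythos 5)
Your proof is correct and takes essentially the same approach as the paper: enclose $\De_r^n$ in a ball slightly enlarged by $\de$ so that $Z_\de\cap\De_r^n\subset(Z\cap S)_\de$, apply Theorem 2.1, and then bound the intermediate terms $(d\de)^j r^{n-j}$ ($m\le j\le n$) by the two endpoint terms; the paper's elementary inequality $a^{n-j}b^j+a^{m+j}b^{n-m-j}\le a^n+a^m b^{n-m}$ and its handling of $(r+\de)^{n-m}$ are just a different packaging of your monotonicity observation and your $(a+b)^k\le 2^k(a^k+b^k)$ step.
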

\begin{proof}
By the triangle inequality we have the following inclusions
$$Z_\de \cap \De_r^n \subset (Z \cap \De_{r+\de}^n)_\de \subset (Z \cap \mathbb B (0, \sqrt{n} (r+\de))_\de.$$
By Theorem 2.1 we have the following chain of estimates with $C_1, C_2$ and $c_j$ are constants depend only on $n$
\begin{eqnarray*}
\vol_n (Z \cap \mathbb B (0, \sqrt{n} (r+\de))_\de & \le & \sum_{j=m}^n c_j (d\de)^j  (\sqrt{n})^{n-j} (r+\de)^{n-j}\\
&\le &C_1 [(d\de)^m (r+\de)^{n-m}+ (d\de)^n] \\
& \le &C_1 [2^{n-m-1} (d\de)^m (r^{n-m}+\de^{n-m}) +(d\de)^n]\\
&\le & C_2 [(d\de)^m r^{n-m}+(d\de)^n].
\end{eqnarray*}
Here we use the elementary inequalities
$$a^{n-j}b^j+ a^{m+j} b^{n-m-j} \le a^n+a^m b^{n-m}$$
for $0 \le j \le n-m$ and $a, b>0$ in the second and the third lines.
\end{proof}
We also need the following elementary lemma.
\begin{lemma}
Let $a_1, \cdots, a_n$ be positive numbers and $n, l \ (n \ge 2)$ be positive integers. Let $f$ be the function defined on  ${(\R^+)}^n$ by
$$f(x_1,\cdots, x_n)=\frac{a_1}{x_1}+a_2\frac{x_1}{x_2}+\cdots+a_n \frac{x_1 \cdots x_{n-1}}{x_n}+ x_n^{1/l}.$$
Then
$$\min_{(\R^+)^n} f=C(n, l) \va (a_1, \cdots, a_n)^{\frac1{2^{n-1}+l}},$$
where $C(n, l)>0$ depends only on $n, l$ and  $$\va (a_1, \cdots, a_n):= a_1^{2^{n-2}} a_2^{2^{n-3}} \cdots a_{n-1} a_n.$$
Moreover, the minimum of $f$ is realized at some point $(x^0_1, \cdots, x^0_n) \in (\R^+)^n$ with
$$x_n^0= \Big (\frac{C(n, l)l}{2^{n-1}+l} \Big )^l \va (a_1, \cdots, a_n)^{\frac{l}{2^{n-1}+l}}.$$
\end{lemma}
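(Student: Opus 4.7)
The plan is to argue by induction on $n\ge 2$. The key observation driving the induction is that if we freeze $x_1>0$ and set $y_i:=x_{i+1}$ and $b_i:=a_{i+1}x_1$ for $i=1,\dots,n-1$, then
\[
f(x_1,\dots,x_n)-\frac{a_1}{x_1}=\frac{b_1}{y_1}+\frac{b_2 y_1}{y_2}+\cdots+\frac{b_{n-1}y_1\cdots y_{n-2}}{y_{n-1}}+y_{n-1}^{1/l}
\]
has exactly the shape of the $(n-1)$-variable instance of the lemma. The inductive hypothesis therefore reduces the full $n$-variable minimization to a single-variable problem of the form $\min_{x_1>0}(a_1/x_1+Kx_1^{\alpha})$, which is handled by elementary calculus.

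For the base case $n=2$, I would first fix $x_2$ and apply AM-GM, $a_1/x_1+a_2x_1/x_2\ge 2\sqrt{a_1a_2/x_2}$, then minimize $2\sqrt{a_1a_2}\,x_2^{-1/2}+x_2^{1/l}$ in $x_2$ by setting the derivative to zero. This produces $C(2,l)=(l+2)l^{-l/(l+2)}$ and $x_2^0=l^{2l/(l+2)}(a_1a_2)^{l/(l+2)}$, matching the stated formulas since $2^{n-1}+l=l+2$ when $n=2$.

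For the inductive step, the hypothesis applied to the reduced sum gives $\min_{x_2,\dots,x_n>0}[f-a_1/x_1]=C(n-1,l)\,\varphi(b_1,\dots,b_{n-1})^{1/(2^{n-2}+l)}$. Substituting $b_i=a_{i+1}x_1$ and using $2^{n-3}+2^{n-4}+\cdots+1+1=2^{n-2}$, one computes $\varphi(b_1,\dots,b_{n-1})=x_1^{2^{n-2}}a_2^{2^{n-3}}\cdots a_{n-1}a_n$. The remaining problem $\min_{x_1>0}(a_1/x_1+Kx_1^{\alpha})$ with $\alpha=2^{n-2}/(2^{n-2}+l)$ has minimum $(\alpha+1)\alpha^{-\alpha/(\alpha+1)}a_1^{\alpha/(\alpha+1)}K^{1/(\alpha+1)}$. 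Using $\alpha+1=(2^{n-1}+l)/(2^{n-2}+l)$ and $\alpha/(\alpha+1)=2^{n-2}/(2^{n-1}+l)$, the exponent on $a_1$ merges with the power on $a_2^{2^{n-3}}\cdots a_{n-1}a_n$ to yield $\varphi(a_1,\dots,a_n)^{1/(2^{n-1}+l)}$, while the remaining scalar factors collect into a constant $C(n,l)$ depending only on $n$ and $l$.

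To identify the minimizer $x_n^0$, I would use the critical-point equations $\partial f/\partial x_k=0$. Writing $T_k$ for the $k$-th summand of $f$, these become $T_k=T_{k+1}+\cdots+T_n$ for $k=1,\dots,n-1$ and $T_{n+1}=lT_n$; they force $T_m=2^{n-1-m}T_n$ for $m=1,\dots,n-1$ and hence $\min f=(2^{n-1}+l)T_n$. Since $T_{n+1}=x_n^{1/l}$, this gives $x_n^{1/l}=lT_n=l\min f/(2^{n-1}+l)$, and raising to the $l$-th power yields the stated formula. The main obstacle is bookkeeping of the exponents: verifying that the powers of $x_1$ inside $\varphi(b_1,\dots,b_{n-1})$ telescope to exactly $2^{n-2}$ and that the constants generated by the one-variable minimization chain consistently through the induction.
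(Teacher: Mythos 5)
Your argument is correct, but it takes a genuinely different route from the paper. The paper's proof is a single application of AM--GM: it splits the $j$-th summand ($1\le j\le n-1$) into $2^{n-j-1}$ equal pieces, splits $x_n^{1/l}$ into $l$ equal pieces, leaves $a_n\,x_1\cdots x_{n-1}/x_n$ as one piece, and applies AM--GM to the resulting $2^{n-1}+l$ terms; the variables $x_1,\dots,x_n$ cancel exactly in the geometric mean, leaving $\varphi(a_1,\dots,a_n)^{1/(2^{n-1}+l)}$, and the equality case of AM--GM directly gives the minimizer and the formula for $x_n^0$. Your proof instead proceeds by induction on $n$: freezing $x_1$, rescaling $b_i=a_{i+1}x_1$, invoking the $(n-1)$-variable case, and then performing a one-variable calculus minimization in $x_1$; you recover the minimizer separately from the critical-point system $T_j=T_{j+1}+\cdots+T_n$ and $T_{n+1}=lT_n$. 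Both approaches yield the correct constant and exponent, and I checked your base case $C(2,l)=(l+2)l^{-l/(l+2)}$, the telescoping $x_1$-exponent $2^{n-3}+\cdots+1+1=2^{n-2}$, and the inductive recursion $C(n,l)=(\alpha+1)\alpha^{-\alpha/(\alpha+1)}C(n-1,l)^{(2^{n-2}+l)/(2^{n-1}+l)}$ with $\alpha=2^{n-2}/(2^{n-2}+l)$: all consistent. The paper's one-shot AM--GM is shorter and avoids the inductive constant-chasing; your inductive decomposition makes the structure of the recursion more transparent and avoids having to guess the right weights in advance. One small point you should make explicit: the existence of an interior minimizer (needed to justify using the critical-point equations for $x_n^0$) follows from coercivity, since $f\to\infty$ as $\|x\|\to\infty$ or as any $x_j\to 0^+$.
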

\begin{proof}
We write
\begin{align*}
f(x_1,\cdots, x_n) &= \Big [ \underbrace{\frac1{l}x_n^{1/l}+\cdots+\frac1{l}x_n^{1/l}}_{l \ \text{times}}\Big] + a_n \frac{x_1\cdots x_{n-1}}{x_n} +\\
&+\sum_{j=1}^{n-1}\Big [ \underbrace{\frac1{2^{n-j-1}} a_j \frac{x_1 \cdots x_{j-1}}{x_j}+\cdots + \frac1{2^{n-j-1}} a_j \frac{x_1 \cdots x_{j-1}}{x_j}}_{2^{n-j-1} \ \text{times}} \Big].
\end{align*}
By applying the inequality of arithmetic and geometric  means we obtain
$$f(x_1, \cdots, x_n) \ge C(n, l) (a_1^{2^{n-2}} a_2^{2^{n-3}} \cdots a_{n-1} a_n)^\frac1{2^{n-1}+l}.$$
Moreover, the equality occurs at some point $(x_1^0, \cdots, x_n^0) \in (\R^+)^n$ such that for all $j = 1, \ldots, n,$
\begin{eqnarray*}
\frac{C(n, l)}{2^{n-1}+l} \va(a_1, \cdots, a_n)^{\frac1{2^{n-1}+l}} &=&  \frac1{l}{x_n^0}^{\frac{1}{l}} \\
&=& a_n \frac{x_1^0 \cdots x_{n-1}^0}{x_n^0} \\
&=&  \frac{a_j}{2^{n-j-1}} \frac{x_1^0 \cdots x_{j-1}^0}{x_j^0}.
\end{eqnarray*}
These equations can be solved easily and we see that $x_n^0$ is indeed given by the desired formula.
The proof is complete.
\end{proof}
\section{Main results}
We begin with a global \L ojasiewicz inequality which is of interest in its own right.
This result is analogous to Theorem 2.1 in \cite{HNP2013} where only smoothness of functions is taken into account.
The following notions related to a polynomial $P \in \mathbb R[x_1, \cdots, x_n]$ are crucial in our work. 
Let $d:=\deg P$, then we write
$$P(x)=\sum_{\{\al \in \mathbb N^n: \vert \al\vert \le d\}} a_\al x^\al.$$
We say that a monomial $x^\al=x_{1}^{\al_1} \cdots x_{n}^{\al_n}, \al=(\al_1, \cdots, \al_n)$ is {\it admissible} with respect to $P$ if $a_\al \ne 0$ and there exists a permutation $(i_1, \cdots, i_n)$ of $(1, \cdots, n)$ such that $\al_{i_1} \ge 1$ and that
for any monomial $x_{i_1}^{\beta_1} \cdots x_{i_n}^{\beta_n}$ with 
$(\al_1, \cdots, \al_n) \ne (\beta_1, \cdots, \beta_n)$
and $a_{\beta_1, \cdots, \beta_n} \ne 0$ we can find an index $1 \le j \le n$ such that $\al_{i_j}>\beta_{i_j}$ while
$\al_{i_k}=\beta_{i_k}$ for every $1 \le k \le j-1$
(if there exists an index $j=1$ with $\al_{i_1} >\beta_{i_1}$ then we should not check the other conditions $\al_{i_k}=\beta_{i_k}$ for $1 \le k \le j-1$).
In this case, we also say that $\al$ is an {\it admissible index} for $P$.
For example, if 
\begin{equation} \label{example}
P(x_1,x_2)=x_1^d+x_1x_2^m+x_2^p,
\end{equation}
where $d>m>p\ge 1$ are integers, then $x_1^d$ and $x_1x_2^m$ are admissible monomials of $P$.

Observe that for each polynomial $P$ on $\mathbb R^n$, there exists at least one admissible monomial. Indeed, for $n=1$, the conclusion is clear. Assume the statement is true for $n-1$, we now write $P$ as a polynomial in the variable $x_1,$
$$P(x_1, x')=x_1^{\al_1} P_0 (x')+....$$
Then by the induction hypothesis, there exists an admissible monomial
$x_2^{\al_2} \cdots x_n^{\al_n}$ for $P_0$. It follows that 
$x_1^{\al_1} x_2^{\al_2} \cdots x_n^{\al_n}$ is an admissible monomial for $P$.

The admissible degree of $P$ is defined as
$$ad(P):=\min\{ \vert \al\vert: x^\al \ \text{is an admissible monomial for}\  P\}.$$
Now our global \L ojasiewicz inequality reads as follows.
\begin{lemma}
Let $\al=(\al_1, \cdots, \al_n) \in \mathbb N^n$ and
$x^\al:=x_{1}^{\al_1} \cdots x_{n}^{\al_n}$  be an admissible monomial with respect to $P$.
Then there exists a real algebraic set  $Z_\al$ such that $\dim (Z(P) \cup Z_\al)=n-1$ and
$$d(x, Z(P) \cup Z_\al) \le  \frac{4}{\vert a_\al\vert^{\fr1{\vert \al\vert}}}
|P(x)|^{\frac{1}{\vert \al \vert}}, \quad \forall x \in \mathbb{R}^n.$$
\end{lemma}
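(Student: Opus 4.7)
My plan is to prove the inequality by induction on $n$. By relabeling the coordinates according to the permutation witnessing admissibility, I may assume it is the identity, so that $\al$ is lex-maximal among multi-indices $\bt$ with $a_\bt\ne 0$. In particular $\al_1=\deg_{x_1}P$, and writing
$$P(x_1,y) = \sum_{k=0}^{\al_1} c_k(y) x_1^k, \quad y=(x_2,\dots,x_n),$$
the leading coefficient $c_{\al_1}(y)$ is itself a polynomial in $y$ whose lex-maximal monomial is $y^{\al'}$ with $\al'=(\al_2,\dots,\al_n)$ and coefficient $a_\al$; after a further relabeling of $(x_2,\dots,x_n)$ if $\al_2=0$, the multi-index $\al'$ is admissible for $c_{\al_1}$.

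\me
\n \emph{Base case $n=1$.} Factor $P$ over $\C$ as
$$P(t) = a_\al \prod_k (t-t_k) \prod_j \bigl((t-r_j)^2 + s_j^2\bigr),$$
collecting real roots $t_k$ and non-real complex conjugate pairs $r_j\pm\mathbf{i}\, s_j$. Setting $Z_\al=\{r_j\}\subset\R$ (a finite, hence algebraic, set), the inequality $(t-r_j)^2+s_j^2\ge (t-r_j)^2$ shows that the product has $\al$ linear factors each bounded below by $d(t,Z(P)\cup Z_\al)$, so $|P(t)|\ge |a_\al|\, d(t,Z(P)\cup Z_\al)^\al$, establishing the bound with constant $1<4$.

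\me
\n \emph{Inductive step $n\ge 2$.} Apply the induction hypothesis to $c_{\al_1}$ to obtain a real algebraic set $Z'\subset\R^{n-1}$ of dimension $\le n-2$ satisfying
$$d(y, Z(c_{\al_1})\cup Z') \le \frac{4|c_{\al_1}(y)|^{1/|\al'|}}{|a_\al|^{1/|\al'|}}.$$
Define
$$Z_\al := \bigl(\R\times(Z(c_{\al_1})\cup Z')\bigr)\cup W,$$
where $W\subset\R^n$ is the real Zariski closure of
$$\{(x_1,y) : c_{\al_1}(y)\ne 0,\ x_1=\text{Re}(t) \text{ for some non-real root } t \text{ of } P(\cdot,y)\}.$$
Both components have dimension at most $n-1$. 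Now fix $x=(x_1,y)$ with $P(x)\ne 0$; if $y\in Z(c_{\al_1})\cup Z'$ the bound is immediate, so assume $c_{\al_1}(y)\ne 0$. Two estimates follow: (A) moving $y$ to the nearest $y^*\in Z(c_{\al_1})\cup Z'$ via the induction gives $d(x,Z_\al)\le A:=4|c_{\al_1}(y)|^{1/|\al'|}/|a_\al|^{1/|\al'|}$; (B) applying the base case to $P(\cdot,y)$ (of degree exactly $\al_1$ in $x_1$ with leading coefficient $c_{\al_1}(y)$) gives $d(x,Z(P)\cup Z_\al)\le B:=|P(x)|^{1/\al_1}/|c_{\al_1}(y)|^{1/\al_1}$.

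\me
\n Combining by the weighted geometric mean $\min(A,B)\le A^\la B^{1-\la}$ with $\la=|\al'|/|\al|$ and $1-\la=\al_1/|\al|$ makes the factor of $|c_{\al_1}(y)|$ cancel exactly, yielding
$$d(x,Z(P)\cup Z_\al) \le 4^{|\al'|/|\al|}\frac{|P(x)|^{1/|\al|}}{|a_\al|^{1/|\al|}} \le \frac{4|P(x)|^{1/|\al|}}{|a_\al|^{1/|\al|}}.$$
The edge case $|\al'|=0$ (where $\al$ is a pure power of $x_1$) collapses to (B) alone. The main obstacle is verifying that $W$ is a real algebraic set of dimension at most $n-1$: the fiberwise ``real parts of complex roots'' form a semi-algebraic set that is not a priori the zero set of a single polynomial, so one must invoke the fact that real Zariski closure preserves the dimension of a semi-algebraic set. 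A secondary bookkeeping nuisance is tracking the correct permutation in the induction when some $\al_i$, $i\ge 2$, vanish.
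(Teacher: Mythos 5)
Your proof is correct in its essentials, but it follows a genuinely different route from the paper's. The paper writes $P$ via its iterated leading coefficients $P_{0,1},P_{0,2},\dots$, takes $Z_\al$ to be a union of zero sets of these coefficients and their derivatives, and on the interval $[x_1-t,x_1]$ (where $t=d(x,Z(P)\cup Z_\al)$) invokes the Chebyshev bound $\max_{[0,1]}|q|\ge 2^{1-2d}$ for monic $q$; iterating in each variable gives $|P(x)|\ge |a_\al| t^{|\al|}/2^{2|\al|}$ and hence the constant $4$. You instead prove the one--variable case by factoring over $\C$ and letting $Z_\al$ be the real parts of the non-real roots (giving the sharper constant $1$), then induct: apply the inductive hypothesis to the leading coefficient $c_{\al_1}$ to get bound (A), apply the base case fiberwise to get bound (B), and interpolate via $\min(A,B)\le A^{|\al'|/|\al|}B^{\al_1/|\al|}$, which makes $|c_{\al_1}(y)|$ cancel exactly. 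The arithmetic is right and the result is even slightly stronger than the paper's if you track the constant.

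Two points are worth flagging. First, the statement asks for $\dim(Z(P)\cup Z_\al)=n-1$, while you only argue $\le n-1$; the paper handles this by a parity argument on $\al_1$. In your setup this is harmless: if your $Z_\al$ happens to give dimension $<n-1$, you may simply adjoin a hyperplane to $Z_\al$, which only decreases distances and preserves the inequality, so the gap is cosmetic. Second, and more substantively: your set $W$ is a Zariski closure of a projected semi-algebraic set, so while it is algebraic of dimension $\le n-1$ (the closure-preserves-dimension fact you cite is correct, see Bochnak--Coste--Roy), its defining polynomials need not have degree bounded by $d=\deg P$. The paper's $Z_\al$ is built out of $\partial P/\partial x_1$, $P_{0,1}$, $\partial P_{0,1}/\partial x_2$, etc., all of degree $\le d$, and this degree control is used when Lemma 2.1 / Wongkew's theorem is applied in Theorem 3.1 to get the factor $d$ in the volume bound. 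Your Lemma 3.1 is true as stated, but the $Z_\al$ you produce would not plug directly into the proof of Theorem 3.1 without further work to control $\deg W$. This is the main respect in which the paper's more explicit construction is preferable.
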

\begin{proof}
We use an idea in the proof of the main theorem in \cite{Dinh2013-2} (see also Theorem 2.1 in \cite{HNP2013}).
After a permutation of coordinates, we achieve that $i_j=j$ for every $1 \le j \le n.$
So we can write
\begin{eqnarray*}
P(x) &=& x_1^{\al_1} P_{0, 1} (x')+\cdots+x_1 P_{\al_1-1, 1} (x')+P_{\al_1, 1} (x'), \\
P_{0, 1} (x') &=& x_2^{\al_2} P_{0, 2} (x'')+\cdots+P_{\al_2, 2} (x''), \\
& \vdots& \\
P_{0, n-1} (x_n) &=&a_\al x_n^{\al_n}+\text{lower order terms}.
\end{eqnarray*}
Here $x'=(x_2, \cdots, x_n), x''=(x_3, \cdots, x_n), \ldots,$ and $P_{i, j}$ are polynomials.
Now we set
\begin{eqnarray*}
Z_1 &:=& \{x \in \R^n: \ \frac{\partial P}{\partial x_1} (x) =0\}, \\
Z_2 &:=& \{ x' \in \R^{n-1}: P_{0, 1} (x') \frac{\partial P_{0, 1}}{\partial x_2} (x')=0\}, \cdots, Z_n :=\{x_n \in \R: P_{0, n-1} (x_n) P'_{0, n-1} (x_n)=0\}\\
Z_\al &:=& \bigcup_{k \in K} (\R^{k-1} \times Z_k).
\end{eqnarray*}
Here $K$ is the set of indices $k\ge 1$ such that $Z_k$ is a proper algebraic subset of $\R^{n-k+1}$. We have to show that  $Z_\al$ is the desired real algebraic set.
First, we note that if $\al_1$ is odd then the equation $P(x_1, x')=0$ always has a real root for each $x' \in \R^{n-1}$. Thus $\dim Z(P)=n-1$.
On the other hand, if $\al_1$ is even then $\ \frac{\partial P}{\partial x_1}$ is a polynomial of odd degree with respect to $x_1$. Therefore $\dim Z_1=n-1$. So, in any case we have $\dim (Z(P) \cup Z_\al)=n-1.$
Now we turn to our basic inequality.
Fix $x \in \R^n \setminus (Z(P) \cup Z_\al)$ and let $t:= d(x,Z(P) \cup Z_\al)>0.$
Consider the polynomial
$$Q (\lambda) := P(\lambda, x') \in \mathbb{R}[\lambda].$$
Since $\min \{d(x, Z_1), d(x, Z(P))\} \ge t>0$, the polynomial $Q$ and its derivative $Q'$ have no root on the interval 
$I := [x_{1}-t, x_1 + t].$ Hence $Q$ is monotone and has constant sign on $I.$ Without loss of generality, we may assume that $Q> 0$ and $Q' > 0$ on $I.$ So $Q$ is positive and increasing on $I.$  Therefore
$$\max_{\lambda \in [x_1- t, x_1]} Q(\lambda) = Q (x_1) = P(x).$$
A classical result of Chebyshev [6, p. 31] says that if $q$ is a {\it monic} real polynomial of 
of one variable having degree $d$ then
$$\max_{0 \le t \le 1} \vert q(t)\vert \ge \fr1{2^{2d-1}}.$$ 
It follows that
$$\vert P(x)\vert=\max_{\lambda \in [x_1-t, x_1]} Q(\lambda) = \max_{\lambda \in [x_1- t, x_1]} |Q(\lambda)| \ge \frac{t^{\al_1}}{2^{2\al_1 - 1}} \vert P_{0, 1} (x')\vert.$$
Notice that $d(x'^k, Z_2) \ge t, \cdots$ by repeating the above reasoning we finally obtain
$$\vert P_{0, n-1} (x_n)\vert \ge \frac{t^{\al_n}}{2^{\al_n-1}} \vert a_\al\vert.$$
Putting all this together we get
$$\vert P(x)\vert \ge \fr{\vert a_\al \vert}{2^{2\vert \al\vert}}t^{\vert \al\vert}.$$
The desired estimate follows easily after rearranging the above inequality 
\end{proof}
\begin{remark}{\rm
If $P$ is the polynomial given in (\ref{example}) then by applying Lemma 3.1 to admissible monomials $x_1^d$ and $x_1x_2^m$, we can find a real algebraic set $Z'$ in $\mathbb R^2$ such that  $\dim (Z(P) \cup Z')=1$ and
$$d(x, Z(P) \cup Z') \le  4 \min \{|P(x)|^{\fr1{d+1}}, |P(x)|^{\fr1{m+1}}\}, \quad \forall x \in \mathbb{R}^2.$$
}\end{remark}
We now come to the first main result of the paper.
\begin{theorem}
Let $P$ be a polynomial of degree $d$ on $\R^n$.
Assume that $\al$ is an admissible index for $P$. Then for every $\de>0, r>0$ we have
\begin{align*}
 \vol_n (V_\de \cap  \De_r^n)   &\le C \Big [d\mu(\al)\de^{1/\vert \al\vert}r^{n-1} + (d\mu(\al)\de^{1/\vert \al\vert})^n\Big ] \\
 & \le C' \Big [ \de^{1/\vert \al\vert}r^{n-1} + \de^{n/\vert \al\vert} \Big ].
 \end{align*}
Here $\mu (\al)= \frac{4}{\vert a_\al\vert^{1/\vert \al\vert}}$ and $C>0$ (resp. $C'>0$) is a constant that depends only on $n$ (resp. on $n, d$ and $\vert a_\al\vert^{1/\vert \al\vert}$).
\end{theorem}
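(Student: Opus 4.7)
The plan is to feed Lemma 3.1 and Lemma 2.1 into each other: first use the \L ojasiewicz inequality to contain $V_\de$ inside a tubular neighborhood of a real algebraic hypersurface, then bound the volume of this neighborhood with Wongkew's theorem.

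More precisely, I would apply Lemma 3.1 to the given admissible index $\al$ to produce the real algebraic set $Z_\al$, and set $Z := Z(P)\cup Z_\al$. Lemma 3.1 tells us two things: $Z$ has dimension $n-1$, i.e.\ codimension $m=1$ in $\R^n$, and every $x\in\R^n$ satisfies
$$d(x,Z)\le \mu(\al)\,|P(x)|^{1/\vert\al\vert}.$$
For $x\in V_\de$ the right-hand side is at most $\eta := \mu(\al)\de^{1/\vert\al\vert}$, so
$$V_\de\cap\De_r^n\subset Z_\eta\cap\De_r^n,$$
which converts the sublevel problem into a tube problem to which Lemma 2.1 applies directly.

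To invoke Lemma 2.1 I need a degree bound for the defining polynomials of $Z$. The next step is to unwind the construction inside the proof of Lemma 3.1: each $Z_k$ in that proof is cut out by a single polynomial of degree at most $2d$ (the product $P_{0,k-1}\cdot\partial_{x_k}P_{0,k-1}$, or $\partial P/\partial x_1$ when $k=1$), and $Z_\al$ is the finite union of the cylindrical extensions $\R^{k-1}\times Z_k$. Hence $Z$ is the zero set of the product of $P$ and these polynomials, a single polynomial whose degree is bounded by $d(2n+1)$, i.e.\ at most $c_n d$ with $c_n$ depending only on $n$. Plugging this degree bound and $\eta$ into Lemma 2.1 with $m=1$ yields
$$\vol_n(V_\de\cap\De_r^n)\le \vol_n(Z_\eta\cap\De_r^n)\le C\bigl[(c_n d\,\mu(\al)\de^{1/\vert\al\vert})r^{n-1}+(c_n d\,\mu(\al)\de^{1/\vert\al\vert})^n\bigr],$$
and absorbing $c_n$ into the dimensional constant $C$ gives exactly the first claimed inequality. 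The second inequality follows trivially from the first by folding $d\mu(\al)=4d/\vert a_\al\vert^{1/\vert\al\vert}$ into a new constant depending only on $n$, $d$ and $\vert a_\al\vert^{1/\vert\al\vert}$.

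The only delicate step is the degree bound of Step 2, and even this is not a genuine obstacle: the union structure of $Z_\al$ is finite and the defining polynomials are all polynomial expressions in the coefficients of $P$ (derivatives and leading coefficients with respect to successive variables), so their degrees are automatically $O(d)$ with constants depending only on $n$. The rest is an immediate chaining of the two lemmas.
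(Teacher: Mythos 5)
Your proposal is correct and follows exactly the paper's own route: apply Lemma~3.1 to the given admissible index $\al$ to place $V_\de\cap\De_r^n$ inside the tube $Z_\eta\cap\De_r^n$ about $Z:=Z(P)\cup Z_\al$ with radius $\eta=\mu(\al)\de^{1/\vert\al\vert}$, then invoke Lemma~2.1 (codimension $m=1$) to bound that tube's volume. The one point where you go beyond the paper's terse write-up is in explicitly checking that the degrees of the defining polynomials of $Z(P)\cup Z_\al$ are $O(d)$ with constant depending only on $n$; the paper simply plugs $d$ into Lemma~2.1 and absorbs that factor into the dimensional constant $C$ without comment, so your extra paragraph is a welcome clarification rather than a deviation.
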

 \begin{proof}
For every $\de>0, r>0$, by Lemma 3.1  we have
$$V_\de \cap \De_r^n \subset \{x \in \De_r^n: d(x, Z(P) \cup Z_\al) <\mu(\al) \de^{1/\vert \al\vert}\}.$$
By applying Lemma 2.1 to $Z(P) \cup Z_\al$, while observing that $\text{codim}\ (Z(P) \cup Z_\al)=1$, we obtain
$$\vol_n  \{x \in \De_r^n: d(x, Z(P) \cup Z_\al) <\mu(\al) \de^{1/\vert \al\vert}\} \le C \Big [d\mu(\al)\de^{1/\vert \al\vert}r^{n-1} + (d\mu(\al)\de^{1/\vert \al\vert})^n\Big ].$$
Here $C>0$ depends only on $n$. The first conclusion follows immediately by combining these above assertions.
This assertion also implies trivially the latter one.
The proof is complete.
\end{proof}
\n
{\bf Remark.} Let $P$ be the polynomial given in (\ref{example}).
Then we may apply Theorem 3.2 for the admissible monomial $x_1x_2^n$
to get the following estimate
\begin{equation} \label{estimate2}
\vol_2 (V_\de \cap  \De_r^2) \le C \Big [r\de^{\fr1{n+1}}+\de^{\fr2{n+1}}\Big],
\end{equation}
where $C>0$ does not depend on $r,\de.$ This estimate is obviously sharper than (\ref{CCW}) when $\de<<1$ and $r>>1.$
\vskip0,3cm
\n
The above result is clearly not satisfactory when $\dim Z(P)<n-1$. For example, if $P(x, y)=x^2+y^2$ then $\vol_2 (V_\de \cap \De_r^2) =\pi \de$ for $r>\de^2.$
In this ``degenerate" case, by imposing additional conditions  on the defining polynomial $P$ and geometry of the real algebraic set $Z(P)$
we have the following estimate for $\vol_n (V_\de \cap \De_r^n)$ with sharper exponent of $r$ (but with implicit exponent of $\de$).
First, we introduce the following piece of terminology: A collection of closed semi-algebraic sets $(S_1, \cdots, S_k)$  in $\R^n$ is said to be {\it separated at infinity} if the following assertion holds:

\n
(i) $S :=S_1 \cap \cdots \cap S_k \ne \emptyset;$

\n
(ii) For sequences $\{a_{1, l}\}_{l \ge1} \subset S_1, \ldots, \{a_{k, l}\}_{l \ge 1} \subset  S_k$ satisfying $\vert a_{j,l}\vert \to \infty$ as $l \to \infty$
$\| a_{j, l}-a_{m, l} \| \to 0$ as $l \to \infty,$ for all $1 \le j < m \le k,$ we have
$$\lim_{l \to \infty} d (a_{j, l}, S)= 0.$$
\n
We should say that in the case $k=2$, this property of "separated at infinity" is essentially equivalent to that of "non-asymptotic at infinity" introduced in \cite{HNP2013}.
For simple examples of semi-algebraic sets which are separated at infinity we may take a finite number of real lines in $\R^2$ that pass through the origin.

Now we come to the following variant of Theorem 3.1.
\begin{proposition}
Let $P$ be a polynomial of degree $d$ with $\dim Z(P)=k<n-1$, where $Z(P):=P^{-1} (0).$ Assume that the following conditions hold:

\n
(a) $P$ is a monic polynomial with respect to each variable $x_i, 1 \le i \le n.$

\n
(b) $(Z_1, \cdots, Z_n)$ are separated at infinity,
where $Z_i:=\{x: \frac{\partial P}{\partial x_i} (x)=0\}.$

Then there exist positive constants $\nu, C$ which are independent of $r, \de$ such that for all $\de>0$ small enough we have
$$\vol_n (V_\de \cap \De_r^n) \le Cr^{k'}\de^\nu,$$
where $Z':=\cap_{i=1}^n  Z_i$ and $k':=\dim Z'.$
\end{proposition}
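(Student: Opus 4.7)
My plan is to decompose $\De_r^n$ into a thin tube around the common critical locus $Z' = \bigcap_i Z_i$ and its complement, and to bound $V_\de$ on each piece with a pair of \L ojasiewicz-type inequalities that exploit hypotheses (a) and (b) respectively. Hypothesis (a) will give a lower bound on each partial derivative in terms of $d(x,Z_i)$; hypothesis (b) will give an upper bound on $d(x,Z')$ in terms of $\max_i d(x,Z_i)$, uniformly up to infinity.

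First, since $P$ is monic in each $x_i$, writing $d_i = \deg_{x_i} P$, the monomial $x_i^{d_i - 1}$ is admissible for $\partial P/\partial x_i$ with coefficient $d_i$. Applying Lemma 3.1 to $\partial P/\partial x_i$ with this monomial yields
$$d(x, Z_i) \;\le\; \frac{4}{d_i^{1/(d_i-1)}}\left|\frac{\partial P}{\partial x_i}(x)\right|^{1/(d_i-1)}, \qquad \forall x \in \R^n.$$
Next, I would establish a global \L ojasiewicz-type comparison
$$d(x, Z') \;\le\; C_1\,\bigl(\max_{1 \le i \le n} d(x, Z_i)\bigr)^{\theta}, \qquad \forall x \in \R^n,$$
for some $C_1, \theta > 0$ independent of $r$. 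On any bounded ball this is a direct consequence of the semi-algebraic \L ojasiewicz inequality applied to the continuous semi-algebraic function $\max_i d(\cdot, Z_i)$, whose zero set is precisely $Z'$; hypothesis (b) is exactly what prevents sequences $x_\ell \to \infty$ along which $\max_i d(x_\ell, Z_i)$ collapses faster than any positive power of $d(x_\ell, Z')$, and a gluing of the bounded-ball exponent with this tame-at-infinity control yields the uniform exponent $\theta$. Combining the two inequalities produces a lower bound of the form $\max_i |\partial P/\partial x_i(x)| \ge C_2\, d(x, Z')^{\eta}$ for some $\eta > 0$ and all $x \in \R^n$.

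Now fix a parameter $\rho > 0$ and decompose $\De_r^n = A \cup B$ with $A := \De_r^n \cap \{x : d(x, Z') < \rho\}$. Lemma 2.1 applied to $Z'$ (of codimension $n-k'$) gives
$$\vol_n(A) \;\le\; C\bigl[(d\rho)^{n-k'} r^{k'} + (d\rho)^n\bigr].$$
On $B$ we have $d(x,Z') \ge \rho$, hence some $|\partial P/\partial x_i(x)| \ge M := c\,\rho^{\eta}$. Partition $B$ by the dominant index and, for each piece, integrate first in the corresponding $x_i$ with the other coordinates fixed: since $\partial P/\partial x_i$ has degree at most $d-1$ in $x_i$, the set $\{x_i : |\partial P/\partial x_i| \ge M/n\}$ consists of at most $2d$ intervals, on each of which $P$ is monotone with derivative of modulus $\ge M/n$, so that $\{|P|<\de\}$ occupies length at most $2n\de/M$ on each such interval. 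Integrating over the remaining $n-1$ coordinates gives
$$\vol_n(V_\de \cap B) \;\le\; C'\, r^{n-1}\, \de\, \rho^{-\eta}.$$
Choosing $\rho$ as an appropriate small power of $\de$ so that both $\rho^{n-k'} r^{k'}$ and $r^{n-1}\de\rho^{-\eta}$ are absorbed into an expression of the form $C\,r^{k'}\de^{\nu}$ (valid for $\de$ sufficiently small) yields the claimed bound.

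The main obstacle is the first step: producing a \emph{quantitative} \L ojasiewicz exponent $\theta$ valid globally on $\R^n$ from the qualitative separation-at-infinity hypothesis (b). A secondary technical point is that the one-dimensional slice argument on each piece requires control of $|\partial P/\partial x_i|$ on entire intervals rather than at single points, handled by using the degree bound on $\partial P/\partial x_i$ to bound the number of monotone arcs of $P$ on each $x_i$-fiber.
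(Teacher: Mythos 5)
Your overall plan is a genuinely different route from the paper's, but it contains a gap in the very first inequality, and it is also more complicated than necessary.

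The gap: you assert that Lemma 3.1, applied to $\partial P/\partial x_i$ with the admissible monomial $x_i^{d_i-1}$, yields
$d(x, Z_i) \le \frac{4}{d_i^{1/(d_i-1)}}\bigl|\frac{\partial P}{\partial x_i}(x)\bigr|^{1/(d_i-1)}$.
But Lemma 3.1 gives an upper bound only on $d(x, Z(\partial P/\partial x_i) \cup Z''_i)$, where $Z''_i$ is the auxiliary algebraic set produced by the lemma (here $Z''_i = \{\partial^2 P/\partial x_i^2 = 0\}$, following the construction in the proof). Since $Z_i \cup Z''_i \supset Z_i$, we have $d(x, Z_i \cup Z''_i) \le d(x, Z_i)$, so the lemma does not bound $d(x, Z_i)$ itself. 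In general $Z''_i \not\subset Z_i$, so your inequality (i) is strictly stronger than what Lemma 3.1 delivers and is not justified. Because your lower bound $\max_i |\partial P/\partial x_i(x)| \ge C_2\, d(x,Z')^{\eta}$ on the ``far'' region $B$ rests entirely on (i), the slicing estimate for $\vol_n(V_\de \cap B)$ does not go through as written.

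What the paper does instead, and why it sidesteps the issue: the paper applies Lemma 3.1 to $P$ itself (not to its partial derivatives), taking as admissible monomial the pure power $x_i^{d}$ provided by hypothesis (a). In that special case the auxiliary set $Z_\alpha$ from the lemma is \emph{exactly} $Z_i = \{\partial P/\partial x_i = 0\}$; and since $\dim Z(P) < n-1$ forces $Z(P) \subset Z_i$ by the implicit function theorem, one gets the clean bound $d(x, Z_i) \le C_1 |P(x)|^{1/d}$ for all $i$. Then $|P(x)| < \de$ places $x$ within distance $C_1\de^{1/d}$ of every $Z_i$, so Proposition 3.2 immediately places $x$ within distance $C_3\de^{1/(\theta d)}$ of $Z'$ (for $\de$ small), and the Yomdin--Comte tube estimate finishes the proof in one step. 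In effect, the paper shows that for $\de$ small and $\rho$ chosen proportional to $\de^{1/(\theta d)}$, your region $V_\de \cap B$ is already empty, so the whole van der Corput/slicing step is dead weight. If you replace your inequality (i) with the paper's $d(x, Z_i) \le C_1|P(x)|^{1/d}$, your decomposition collapses to the paper's argument with $B$ contributing nothing.

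A secondary, minor point: you invoke Lemma 2.1 for $Z'$, which is phrased for a variety of a single codimension $m$; since $Z'$ need not be pure-dimensional, it is cleaner to cite Theorem 5.9 in the Yomdin--Comte reference directly (as the paper does), which bounds $\vol_n(Z'_\rho \cap \De_r^n)$ by $C\, r^{k'}\rho^{n-k'}$ with $k' = \dim Z'$.
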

\n
{\bf Remarks.}
(i) It is easy to see that after a linear change of coordinates $T$, a general polynomial $P$ can always be put in the form of (a). Moreover, if the matrix of $T$ is {\it unitary} then
the property (b) is also preserved.

\n
(ii) For an application, consider the two real variables polynomial  $P(x, y):= (y-x^3)^2+(x-y^3)^2$.
Then $P$  is a monic polynomial of degree $6$ with respect to each variable $x$ and $y.$ By direct computations we get
$$Z_1=\{(x, y) \in \R^2: y^3+3x^2y=3x^5+x\}, Z_2=\{(x, y) \in \R^2: x^3+3y^2x= 3y^5+y\}.$$
We claim that $(Z_1, Z_2)$ is separated at infinity.
For this, consider two sequences $(x_n, y_n) \in Z_1, (x'_n, y'_n) \in Z_2$ with $\vert x_n-x'_n \vert \to 0$ and $\vert y_n-y'_n\vert \to 0.$
Assume that $\vert x_n \vert \to \infty,$ then since $(x_n, y_n) \in Z_1$ we have
$$3x_n^2y_n+y_n^3=3x_n^5+x_n.$$
It follows that $\vert y_n\vert \ge \vert x_n\vert$ for $n \ge n_0$. This implies, for $n \ge n_1,$ that
$$\vert y_n\vert >2^{-1/3} \vert x_n\vert^{5/3}+1.$$
Therefore
$$\vert y_n'\vert > 2^{-1/3} \vert x_n\vert^{5/3}.$$
On the other hand, since $(x'_n, y'_n) \in Z_2$ we obtain
$$3x'_n {y'_n}^2+{x'_n}^3=3{y'_n}^5+y'_n.$$
Since $\vert y'_n\vert \to \infty,$ by the same reasoning we get
$$\vert x'_n\vert> 2^{-1/3} \vert y'_n\vert^{5/3}, \forall n \ge n_2.$$
Putting the two last inequalities together we obtain
$$\vert x'_n\vert> 2^{-8/9}\vert x_n\vert^{25/9},  \forall n \ge n_3.$$
This contradicts the fact that $\vert x_n- x'_n\vert \to 0$. Thus the sequence $\{x_n\}$ is bounded. It follows that $\{y_n\}, \{x'_n\}, \{y'_n\}$ are all bounded as well.
Therefore $(Z_1, Z_2)$ is separated at infinity. Since $\dim Z=\dim Z'=0$, we may apply Proposition 3.1 to get the following estimate
$$\vol_2 (V_\de \cap \De_r^2) \le C \de^\al$$
for $r>0$ and $\de>0$ small enough, where $\al>0$ is a constant that does not depend on $r, \de.$

The key ingredient in the proof is the following fact which may be of independent interest.
\begin{proposition}
Let $(S_1, \cdots, S_k)$ be a collection of closed semi-algebraic sets in $\R^n$ which are separated at infinity.
Then there exist positive constants  $\theta, C, \de$ such that
$$Cd(x, S)^\theta \le d(x, S_1) + \cdots+d(x, S_k) \  \text{if}\  \max \{d(x, S_1), \cdots, d(x, S_k)\}<\de.$$
\end{proposition}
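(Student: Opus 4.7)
The plan is to reduce the desired global \L ojasiewicz-type inequality to a one-variable question by means of the semi-algebraic minorant function
$$\tilde m(t) \;:=\; \inf\bigl\{F(x) \; : \; x \in \R^n,\; d(x,S) \ge t\bigr\}, \quad t\ge 0,$$
where $F(x) := d(x,S_1)+\cdots+d(x,S_k)$. By Tarski--Seidenberg applied to the (semi-algebraic) graphs of $F$ and $d(\cdot,S)$, the function $\tilde m$ is semi-algebraic; from its definition it is clearly non-decreasing with $\tilde m(0)=0$. Moreover $F(x)\ge \tilde m(d(x,S))$ for every $x\in\R^n$, so the target inequality will follow once we exhibit constants $C_0,\theta,t_0>0$ with $\tilde m(t)\ge C_0 t^\theta$ for $t\in(0,t_0]$.

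The decisive step is to prove strict positivity: $\tilde m(t)>0$ for every $t>0$. Suppose instead that $\tilde m(t_*)=0$ for some $t_*>0$ and pick a sequence $\{x_l\}\subset\R^n$ with $d(x_l,S)\ge t_*$ yet $F(x_l)\to 0$. For each $i$ and $l$ choose $a_{i,l}\in S_i$ with $\|x_l-a_{i,l}\|\le d(x_l,S_i)+1/l$; the triangle inequality gives $\|a_{i,l}-a_{j,l}\|\le F(x_l)+2/l\to 0$ for all $i,j$. If $\{x_l\}$ stays bounded, a subsequence converges to some $x_\infty$ with $F(x_\infty)=0$, hence $x_\infty\in S$, and $d(x_l,S)\to 0$, contradicting $d(x_l,S)\ge t_*$. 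If $\{x_l\}$ is unbounded, extract a subsequence with $\|x_l\|\to\infty$; then $\|a_{i,l}\|\to\infty$ for every $i$, and the separated-at-infinity hypothesis applied to the sequences $\{a_{i,l}\}$ yields $d(a_{i,l},S)\to 0$. Combined with $\|x_l-a_{i,l}\|\to 0$ this gives $d(x_l,S)\to 0$, again contradicting $d(x_l,S)\ge t_*$.

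A non-decreasing semi-algebraic function on $[0,\infty)$ that vanishes at $0$ and is strictly positive on $(0,\infty)$ admits a Puiseux-type expansion near $0^+$ with positive leading coefficient; this is a standard consequence of the structure theorem for semi-algebraic functions of one real variable. Hence there exist $C_0,\theta,t_0>0$ such that $\tilde m(t)\ge C_0 t^\theta$ for all $t\in(0,t_0]$. Set $\de:=\tilde m(t_0)/k>0$. Whenever $\max_{1\le i\le k} d(x,S_i)<\de$ we have $F(x)\le k\max_i d(x,S_i)<k\de=\tilde m(t_0)$, so monotonicity of $\tilde m$ forces $d(x,S)<t_0$, and then
$$\sum_{i=1}^k d(x,S_i) \;=\; F(x) \;\ge\; \tilde m\bigl(d(x,S)\bigr) \;\ge\; C_0\, d(x,S)^\theta,$$
which is the asserted inequality with $C:=C_0$.

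The main obstacle is clearly the positivity step for $\tilde m$: that is exactly where one has to rule out sequences along which all $d(x_l,S_i)$ collapse simultaneously to $0$ while $d(x_l,S)$ stays bounded below. The bounded case is handled by compactness, while the unbounded case is controlled precisely by the separated-at-infinity hypothesis, which transports the asymptotic coincidence of the approximating sequences $\{a_{i,l}\}$ into the coincidence with $S$. Everything else---the semi-algebraicity of $\tilde m$ and its Puiseux behaviour at $0$---is routine semi-algebraic machinery.
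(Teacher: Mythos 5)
Your proof is correct and follows essentially the same strategy as the paper's: build an auxiliary one-variable semi-algebraic function from $d(\cdot,S)$ and $F=\sum_i d(\cdot,S_i)$, use the separated-at-infinity hypothesis on extremizing sequences to control its behaviour near the origin, and invoke the structure theorem (growth dichotomy / Puiseux) for semi-algebraic functions of one real variable. You work with $\tilde m(t)=\inf\{F(x):d(x,S)\ge t\}$ rather than the paper's $\mu(t)=\sup\{d(x,S):F(x)=t\}$, and in fact you are slightly more careful than the paper, which applies the separated-at-infinity hypothesis directly without first disposing of the bounded case by compactness.
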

\begin{proof}
We use an argument similar to that of Lemma 3.6 in \cite{Dinh2012-1}.
Since $S_1, \ldots, S_k$ are semi-algebraic we infer that
$$f(x):=d(x, S) \quad \textrm{ and } \quad g(x):=d (x, S_1)+\cdots+d(x, S_k)$$
are semi-algebraic functions on $\R^n.$ We claim that
 for every sequence $\{a_l\}_{l \ge 1} \subset \R^n$ satisfying $g(a_l) \to 0$ we have $f(a_l) \to 0.$
 Then we choose $\{a_{1, l}\}_{l \ge1} \subset S_1, \cdots, \{a_{k, l}\}_{l \ge 1} \subset  S_k$
  such that
 $$\|a_{j, l}-a_l \| = d(a_l, S_j), \quad \forall 1 \le j \le k.$$
 It follows that
 $\|a_{j, l} -a_{m, l} \| \le d(a_l, S_j)+d (a_l, S_m) \to 0$  as $l \to \infty$
 for every $1 \le j<m \le k.$
 This implies, in view of the assumption (b) that $\lim_{l \to \infty} d(a_{j, l}, S)= 0$ for every $1 \le j \le k.$ So we get $f(a_l) \to 0$
 and the claim follows.
 Now we set
 $$\mu (t):= \sup \{f(x) : g(x) = t\} \quad \textrm{ for } \quad t \ge 0.$$
 It is well known that the function $\{t \ge 0\} \rightarrow \mathbb{R}, t \mapsto \mu(t),$ is semi-algebraic. Moreover, by the above reasoning we have
$\lim_{t \to 0} \mu (t) = 0.$
Thus we may apply the growth dichotomy theorem (see \cite{Bochnak1998}) to reach the desired conclusion.
 \end{proof}
\begin{proof} (of Proposition 3.1)
From the assumption $(a)$ and Lemma 3.1 (see the remark that follows the lemma) we can find a constant $C_1>0$ such that
$$d(x, Z(P) \cup Z_i) \le C_1\vert P(x)\vert^{\frac1{d}} \ \forall 1 \le i \le n.$$
Since $\dim Z(P)<n-1$, by the implicit function theorem we have $Z(P)\subset Z_i$ for every $i$. Therefore
$$V_\de \subset (Z_i)_{C_1 \de^{1/d}}, \  \forall \de>0,  \forall 1 \le i \le n.$$
It follows that
\begin{equation}
V_\de \cap \De_r^n \subset \Big (\bigcap_{i=1}^n (Z_i)_{C_1 \de^{1/d}}\Big ) \cap \De_r^n, \  \forall \de>0.
\end{equation}
Since the collection of semi-algebraic sets $(Z_1, \cdots, Z_n)$ is separated at infinity,
using Proposition 3.2 we get positive constants $C_2, \theta, \de_0$ such that
$$C_2 d(x, Z')^\theta \le d(x, Z_1)+\cdots+d(x, Z_n) \  \text{if}\  \max \{d(x, Z_1), \cdots, d(x, Z_n)\}<\de_0.$$
Therefore
\begin{equation}
\Big (\bigcap_{i=1}^n (Z_i)_{C_1 \de^{1/d}}\Big ) \cap \De_r^n \subset Z'_{C_3 \de^{\frac1{\theta d}}} \cap \De_r^n, \ \forall \de<\de_1:=(\de_0/C_1)^d,\end{equation}
where $C_3=(nC_1/C_2)^{1/\theta}.$
Applying Theorem 5.9 in \cite{Yomdin2004} we obtain
\begin{equation}
\vol_n (Z'_{C_3 \de^{\frac1{\theta d}}} \cap \De_r^n) \le C_4 r^{k'} \de^{\frac{n-k'}{\theta d}}. \end{equation}
Putting (7), (8) and (9) all  together we compete the proof.
\end{proof}
We now come to the second main result of the paper.
\begin{theorem} Let $n \ge 2$ and $P$ be a polynomial in $\R^n.$ Define inductively the following real polynomials
 $$P_1=P, P_m (x):= \det \Big (\frac{\partial P_i}{\partial x_j} \Big )_{1 \le i, j \le m-1}, 2 \le m \le n+1.$$
 Assume that $\deg P_{l+1} \ge 1$  for some $1 \le l \le n.$
 Then for every admissible multi index $\al$ of $P_{l+1}$, there exists a constant $M>0$ that depends on $l ,\vert \al\vert$ such that
 for  $r>0, \de>0$ with $r>M\de^{\frac{\vert \al\vert}{\vert \al\vert+1}}$
 we have
$$\vol_n (V_\de \cap \De_r^n) \le C r^{n-1+\theta} \de^{\tau}.$$
where $\theta:=\frac{1-2^{l-1}}{2^{l-1}+\vert \al\vert},\tau:=\frac{2^{l-1}}{2^{l-1}+\vert \al\vert}.$ Here $C>0$ is a constant depends only on $P_1, \cdots, P_{l+1}$.
\end{theorem}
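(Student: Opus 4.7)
The plan is to iterate the change-of-variables idea used in the $l=1$ argument through the full Jacobian chain $P_1,\ldots,P_{l+1}$. The essential observation is that, for each $1\le j\le l$, the polynomial $P_{j+1}$ is, by construction, the Jacobian determinant of the map
$$\Phi_j(x):=(P_1(x),\ldots,P_j(x),x_{j+1},\ldots,x_n),$$
whose fibres have cardinality at most $D_j:=\prod_{i=1}^{j}\deg P_i$ by a Bezout-type bound. On the locus $\{|P_{j+1}|\ge\eta_j\}$ the area formula then yields $\eta_j\vol_n(U)\le D_j\vol_n(\Phi_j(U))$, trading a power of $\eta_j$ for a cheap box estimate on the image.

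Introduce thresholds $\eta_1,\ldots,\eta_l>0$ (to be optimized later). I partition $V_\delta\cap\Delta_r^n$ according to the smallest $j$ for which $|P_{j+1}|\ge\eta_j$: set
$$U_j:=V_\delta\cap\Delta_r^n\cap\{|P_{j+1}|\ge\eta_j\}\cap\bigcap_{i=2}^{j}\{|P_i|<\eta_{i-1}\},\qquad 1\le j\le l,$$
and let $U_{l+1}$ be the residual piece where $|P_i|<\eta_{i-1}$ for all $2\le i\le l+1$. For $j\le l$, the image $\Phi_j(U_j)$ is contained in the box $[-\delta,\delta]\times[-\eta_1,\eta_1]\times\cdots\times[-\eta_{j-1},\eta_{j-1}]\times\Delta_r^{n-j}$, giving
$$\vol_n(U_j)\le C_j\,\delta\,\eta_1\cdots\eta_{j-1}\,r^{n-j}/\eta_j,$$
while Theorem 3.1, applied to $P_{l+1}$ with admissible index $\alpha$, yields $\vol_n(U_{l+1})\le C(\eta_l^{1/|\alpha|}r^{n-1}+\eta_l^{n/|\alpha|})$.

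Next, substitute $\eta_j=\tilde\eta_j r^{c_j}$, where the exponents $c_j$ are determined by the requirement that every $r$-power appearing in the sum collapse to $r^{K}$ with $K=n-1+\theta$. A short induction gives $c_j=1-2^{j-1}(1+\theta)$, and in particular $c_l/|\alpha|=\theta$, so the term $\eta_l^{1/|\alpha|}r^{n-1}$ also becomes $r^{K}\tilde\eta_l^{1/|\alpha|}$. After this rescaling, the optimization reduces to
$$\min_{\tilde\eta_1,\ldots,\tilde\eta_l>0}\Bigl[\tfrac{C_1\delta}{\tilde\eta_1}+\tfrac{C_2\delta\,\tilde\eta_1}{\tilde\eta_2}+\cdots+\tfrac{C_l\delta\,\tilde\eta_1\cdots\tilde\eta_{l-1}}{\tilde\eta_l}+C_{l+1}\tilde\eta_l^{1/|\alpha|}\Bigr],$$
which is exactly the problem treated by Lemma 2.2 with $(n,l)$ there replaced by $(l,|\alpha|)$ and $a_j=C_j\delta$. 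Since $\varphi(C_1\delta,\ldots,C_l\delta)\asymp\delta^{2^{l-1}}$, the minimum is $\asymp\delta^{2^{l-1}/(2^{l-1}+|\alpha|)}=\delta^{\tau}$ and is realised at $\tilde\eta_l^{0}\asymp\delta^{|\alpha|\tau}$. (Lemma 2.2 is stated for at least two variables; the case $l=1$ reduces to a one-variable AM--GM with the same conclusion.) Consequently the sum of the first $l+1$ contributions is bounded by $Cr^{n-1+\theta}\delta^{\tau}$.

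The main obstacle is the absorption of the residual term $\eta_l^{n/|\alpha|}$ from Theorem 3.1. At the optimum this term is $\asymp\delta^{n\tau}r^{n\theta}$, and the desired bound $\delta^{n\tau}r^{n\theta}\le r^{n-1+\theta}\delta^{\tau}$ rearranges to $r\ge C\delta^{\tau/(1-\theta)}$. In the complementary subrange where this fails, the trivial estimate $\vol_n(V_\delta\cap\Delta_r^n)\le(2r)^n$ must itself be at most $Cr^{n-1+\theta}\delta^{\tau}$; a direct computation shows that choosing the constant $M$ in the hypothesis $r>M\delta^{|\alpha|/(|\alpha|+1)}$ large enough, depending on $l$, $|\alpha|$ and the constants $C_j$, makes the two subranges together cover the full domain and yields the theorem. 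The remaining bookkeeping of constants through the rescaling, and the verification of the Bezout bound on fibre cardinality of $\Phi_j$, are routine.
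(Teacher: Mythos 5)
Your argument reproduces the paper's proof: the same Jacobian chain $\Phi_j(x)=(P_1(x),\ldots,P_j(x),x_{j+1},\ldots,x_n)$ with $J\Phi_j=P_{j+1}$, the same area-formula/Markov estimate on the sets where $|P_{j+1}|$ exceeds a threshold, the same reduction of the terminal piece to Theorem~3.1, and the same optimization via Lemma~2.2 yielding the exponents $\theta$ and $\tau$. The only cosmetic deviations are that you justify the uniform fibre-cardinality bound by a Bezout count rather than Sard's theorem, that you pre-rescale $\eta_j=\tilde\eta_j r^{c_j}$ before invoking Lemma~2.2 whereas the paper simply puts $a_j=r^{1-j}\delta$, and that you absorb the residual term $\eta_l^{n/|\alpha|}$ by a subrange split plus the trivial bound $(2r)^n$ rather than dropping it under the constraint $\delta_l<r$; these amount to the same bookkeeping.
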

\n
{\bf Remarks.} (i) The assumption that $\deg P_{l+1} \ge 1$ implies
$P_1, \cdots, P_l$ are algebraically independent in the sense that there does not exist a polynomial $Q \in \R [y_1, \cdots, y_l], Q \not \equiv 0$ satisfying
$Q(P_1, \cdots, P_l) \equiv 0$ on $\R^n$. Indeed, suppose otherwise, then after a linear change of coordinates in $\R^l$, we may assume that $Q$ is a monic polynomial of degree $d$ in $y_1.$
By differentiating the equation $Q(P_1, \cdots, P_l)=0$  with respect to variables $x_1, \cdots, x_l$ we obtain the "linear" system of equations
$$\sum_{j=1}^l \frac{\partial Q}{\partial y_j} (P_1 (x), \cdots, P_l (x)) \frac{\partial P_j}{\partial x_i} (x)=0, \ \forall 1 \le i \le l.$$
This implies that
$Q_1 (P_1, \cdots, P_l) \equiv 0$ on $\R^n \setminus P_{l+1}^{-1} (0),$
where $Q_1:=\frac{\partial Q}{\partial y_1}$. Since $P_{l+1} \not\equiv 0$ , we have $Q_1 (P_1, \cdots, P_l) \equiv 0$ entirely on $\R^n.$
Continuing this process $d$ times we obtain $Q_d:= \frac{\partial^d Q}{\partial y_1^d} \equiv 0$ which is absurd.

\n
(ii) Let $P$ be the polynomial given in (\ref{example}). By direct computations we get
$$\begin{aligned}
P_2(x_1,x_2)&=\fr{\partial P}{\partial x_1}=dx_1^{d-1}+x_2^m,\\
P_3 (x_1,x_2)&=\fr{\partial P}{\partial x_1}\fr{\partial^2 P}{\partial x_1\partial x_2}-
\fr{\partial P}{\partial x_2}\fr{\partial^2 P}{\partial x_1^2}=dm(2-d)x_1^{d-1}x_2^{m-1}-dp(d-1)x_1^{d-2}x_2^{p-1}.
\end{aligned}$$
Since $d \ge 3,$ we see that $x_1^{d-1}x_2^{m-1}$ is an admissible monomial for $P_3$.
By applying Theorem 3.2 to $P_3$ and this admissible monomial, 
we obtain constants $C>0, M>0$ such that for $\de>0$ and $r>M \de^{\fr{d+n-2}{d+n-1}},$ 
we have
$$\vol_n (V_\de \cap \De_r^n) \le  Cr^{1-\lambda}\de^{\lambda}.$$
where $\lambda:=\dfrac{1}{d+n}$.

\noindent The above estimate is clearly better than (\ref{estimate2}) if $r\de^{\fr{d+n}{1+n}}>1.$
\begin{proof}
In what follows, by $C_1, C_2, \cdots$ we mean constants that depend only on $P_1, \ldots, P_{n+1}.$
For $1 \le m \le n$ we define the polynomial maps
$$\Phi_m (x):= (P_1 (x),  \ldots, P_m (x), x_{m+1}, \ldots, x_n).$$
It is clear that  the Jacobian $J\Phi_m$ equals to $P_{m+1}$.
By Sard's theorem there exists a subset $X$ of Lebesgue measure $0$ in $\R^n$ and a constant $C_1>0$
such that for each $1 \le m  \le l$ we have
$$\# \{t \in \R^n:  \Phi_m (t)=\xi\} \le C_1, \quad \forall  \xi \in \R^n \setminus X.$$
For $\de_1>0, \cdots, \de_l>0$ we set $W_1:= V_\de$ and for each $2 \le m \le l+1$  
$$W_m:= \{x \in \R^n: \vert P_1 (x)\vert<\de, P_2 (x)<\de_1, \cdots, \vert P_m (x)\vert<\de_{m-1}\}.$$
By the area formula (cf. \cite{Federer1969}), for $1 \le m \le l$ we get
\begin{equation}
\int_{W_m \cap \De_r^n}\vert P_{m+1}\vert d\la_n=\int_{W_m \cap \De_r^n} \vert  J \Phi_m \vert  d\la_n =
\int _{(\Phi_m (W_m \cap \De_r^n)) \setminus X} \# \{t \in W_m \cap \De_r^n:  \Phi_m (t)=\xi\} d\la_n (\xi).\end{equation}
Since $\Phi_m (W_m \cap \De_r^n)$ is contained in the cube $(-\de, \de) \times (-\de_1, \de_1) \times \cdots \times (-\de_{m-1}, \de_{m-1}) \times (-r, r) \times \cdots \times (-r, r),$
we obtain
\begin{equation}
\vol_n (\Phi_m (W_m \cap \De_r^n)) \le 2^n r^{n-m} \de \de_1 \cdots \de_{m-1}.\end{equation}
Thus, from (10) and (11) we infer
$$\int_{W_m \cap \De_r^n}\vert P_{m+1}\vert d\la_n \le 2^n C_1 r^{n-m} \de \de_1 \cdots \de_{m-1}.$$
It follows that
$$\vol_n \{x \in W_m \cap \De_r^n: \vert P_{m+1} (x)\vert \ge \de_m\} \le 2^n C_1 r^{n-m} \de \frac{\de_1 \cdots \de_{m-1}}{\de_m}.$$
So for every $1 \le m \le l$ we have
\begin{equation}
\vol_n (W_m \cap \De_r^n)  \le C_2 \Big [\vol_n (W_{m+1} \cap \De_r^n)+r^{n-m}\de \frac{\de_1 \cdots \de_{m-1}}{\de_m}\Big ].\end{equation}
On the other hand, for $0<\de_l<r$,
by Theorem 3.1 and the relation between $r$ and $\de$ we obtain
\begin{equation}
\vol_n (W_{l+1} \cap \De_r^n) \le C_3 \Big [r^{n-1}\de_l^{1/\vert \al\vert}+ \de_l^{n/\vert \al\vert} \Big ] \le 2C_3 r^{n-1}\de_l^{1/\vert \al\vert}. \end{equation}
Summing up, from (12) and (13) we obtain the following estimate which holds true for every $\de_1>0, \cdots, \de_{l-1} >0, 0<\de_l<r$
\begin{equation}
\vol_n (V_\de \cap \De_r^n) \le C_4r^{n-1} f(\de_1, \cdots, \de_l)
\end{equation}
where   $$ f(\de_1, \cdots, \de_l)=\frac{\de}{\de_1}+r^{-1}\de\frac{\de_1}{\de_2}+\cdots+r^{1-l} \de \frac{\de_1 \cdots \de_{l-1}}{\de_l}+\de_l^{1/\vert \al\vert}.$$
It now suffices to choose $\de_1, \cdots, \de_l$ in the above range that minimizes $f$.
In the notation of Lemma 2.2  we find that
\begin{equation}
\min_{(\R^+)^l} f=C(l, \vert \al\vert) \va (\de, r^{-1} \de, \cdots, r^{1-l} \de)^{\theta+\tau}.
\end{equation}
Our first task is to write down precisely the term on the right hand side.
We have
$$\va (\de, r^{-1} \de, \cdots, r^{1-l} \de)= \de^{2^{l-2}} (r^{-1}\de)^{2^{l-3}} \cdots (r^{2-l} \de) (r^{1-l} \de)
=\de^{2^{l-1}} r^{\psi}.$$
where $\psi:=(1-l)+\sum_{j=2}^l (j-l) 2^{j-2}.$

\noindent Since
\begin{align*}
\psi=1-l+\sum_{j=2}^l (j-l) 2^{j-2} &= 1-l(1+\sum_{j=2}^l 2^{j-2})+\sum_{j=2}^l j2^{j-2}\\
&=1-l2^{l-1}+\frac1{4}\sum_{j=2}^l j2^j \\
&=1-l2^{l-1}+2^{l-1}(l-1)=1-2^{l-1}
\end{align*}
we obtain
$$ \va (\de, r^{-1} \de, \cdots, r^{1-l} \de)=r^{1-2^{l-1}}\de^{2^{l-1}}.$$
It follows that
$$\min_{(\R^+)^l} f =C(l, \vert \al) r^{\theta} \de^{\tau}.$$
where $\theta=\frac{1-2^{l-1}}{2^{l-1}+\vert \al\vert},\tau=\frac{2^{l-1}}{2^{l-1}+\vert \al\vert}.$

\noindent It is clear that $f$ attains its minimum at some point $(\de_1^0, \cdots, \de_l^0)$  with $\de_1^0>0, \cdots, \de_l^0>0.$
Finally, we must show that $\de_l^0<r.$
By Lemma 2.2, we have
$$\de_l^0= C' (l, \vert \al \vert) r^{\theta \vert \al\vert} \de^{\tau \vert \al \vert}.$$
This implies that $\de_l^0<r$ provided that $r>C'' (l, \vert \al\vert) \de^{\frac{\vert \al\vert}{\vert \al\vert+1}}.$
Putting this assertion together with (14) and (15) we arrived at the
desired estimate.
 \end{proof}
\section
{Estimations of certain integrals}
\vskip0,2cm
\n
In this final section, we will first apply the preceding volume estimates to get some lower bounds on integration index of real polynomials.
\begin{proposition}
Let $P$ be polynomials in $\R^n$ with $P(0)=0$ and $\al$ be an admissible index for $P$.
Then for every $\mu \in (0, \frac1{\vert \al\vert})$ and $r>0$ we have
$$\int_{\De_r^n} \frac1{\vert P(x)\vert^ \mu} d\la_n \le  Cr^{n-\mu \vert \al\vert}.$$
In particular $\i (P) \ge \fr1{ad(P)}.$
\end{proposition}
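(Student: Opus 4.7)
My plan is to combine the layer cake (distribution) formula with the volume estimate of Theorem 3.1. Starting from the identity
\[
|P(x)|^{-\mu} = \mu \int_0^\infty s^{-\mu-1} \mathbf{1}_{\{|P(x)| \le s\}} \, ds,
\]
Fubini's theorem yields
\[
\int_{\De_r^n} |P(x)|^{-\mu}\, d\la_n = \mu \int_0^\infty s^{-\mu-1} \vol_n(V_s \cap \De_r^n)\, ds.
\]
The idea is then to split this integral at a threshold $s_0$, using Theorem 3.1 on the low-amplitude part $(0,s_0)$ and the trivial bound $\vol_n(V_s \cap \De_r^n) \le (2r)^n$ on $(s_0,\infty)$.

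For $s \in (0,s_0)$, Theorem 3.1 gives $\vol_n(V_s \cap \De_r^n) \le C'[s^{1/|\al|} r^{n-1} + s^{n/|\al|}]$. Since the assumption $\mu < 1/|\al|$ makes both exponents $s^{1/|\al|-\mu-1}$ and $s^{n/|\al|-\mu-1}$ integrable near $0$, direct computation gives
\[
\mu \int_0^{s_0} s^{-\mu-1}\vol_n(V_s \cap \De_r^n)\, ds \le C_1 \bigl[s_0^{1/|\al|-\mu} r^{n-1} + s_0^{n/|\al|-\mu}\bigr].
\]
For the tail,
\[
\mu \int_{s_0}^\infty s^{-\mu-1} (2r)^n\, ds = (2r)^n s_0^{-\mu}.
\]

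The final step is to choose $s_0 = r^{|\al|}$ to balance these contributions. With this choice, $s_0^{1/|\al|-\mu} r^{n-1} = r^{n-\mu|\al|}$, $s_0^{n/|\al|-\mu} = r^{n-\mu|\al|}$, and $(2r)^n s_0^{-\mu} = 2^n r^{n-\mu|\al|}$, so every term is of the claimed order $r^{n-\mu|\al|}$. This proves the first assertion. For the statement about the integration index, note that the minimum admissible $|\al|$ equals $ad(P)$ by definition; hence for any $\mu < 1/ad(P)$ the integral is finite on $\De_r^n$ for every $r > 0$, which in particular implies $\i(P) \ge 1/ad(P)$.

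The only delicate point I anticipate is confirming that the convergence of the small-$s$ integral at $0$ requires precisely the hypothesis $\mu < 1/|\al|$ (the term $s^{1/|\al|-\mu-1}$ is the binding one, since $n/|\al| > 1/|\al|$), and verifying that the constant $C'$ from Theorem 3.1 which depends on $n, d, |a_\al|^{1/|\al|}$ only does produce a constant $C$ of the same type in the final bound. Beyond this bookkeeping, the argument is purely a layer-cake computation.
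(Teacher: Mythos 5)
Your proof is correct and takes essentially the same route as the paper: both rewrite the integral via a layer-cake/distribution formula, split at a threshold, apply Theorem 3.1 on the small-level part and the trivial bound $\vol_n(\De_r^n)=(2r)^n$ on the tail, and choose the threshold to balance the two contributions. Your threshold $s_0=r^{|\al|}$ corresponds exactly (under the substitution $\delta = s^{-\mu}$) to the paper's optimal point $a_0=r^{-1/s}$ with $s=(\mu|\al|)^{-1}$, so the two computations are identical up to a change of variable in the level parameter.
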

\begin{proof}
Fix $\mu \in (0, \frac1{\vert \al\vert})$ and set $s:= (\mu \vert \al\vert)^{-1}$. Then $s >1$. In what follows by $C, C', \cdots$ we mean positive constants which are independent of $r.$
Now for $\de>\frac1{2}r^{-1/s}$, using Theorem 3.1, we get
\begin{align*}
\va (\de, r):&=\vol_n \{x \in \De_r^n: \frac1{\vert P(x))\vert^\mu} \ge \de \}=
\vol_n \{x \in \De_r^n: \vert P (x)\vert \le \de^{-1/\mu}\}\\
& \le C [r^{n-1} \de^{-s}+ \de^{-ns}] \le C'r^{n-1} \de^{-s}.
\end{align*}
For any $a>\frac1{2}r^{-1/s}$, by the arrangement formula (see \cite[p. 421]{H-S1975}) we have
\begin{eqnarray*}
\int_{\De_r^n} \frac1{\vert P(x)\vert^\mu} d\la_n &=& \int_0^\infty \va(\de, r)d\de  =  \int_0^a \va (\de, r)d\de+\int_a^\infty \va (\de, r)d\de \\
& \le & 2^n r^n a+C' \int_a^{\infty} r^{n-1} \de^{-s}d\de \le C''
 [\underbrace{r^n a+r^{n-1} \frac{a^{1-s}}{s-1}}_{:=f(a)}].
\end{eqnarray*}
Since $f' (a)= r^n-r^{n-1} a^{-s}$ changes sign at $a_0:= r^{-1/s}$, it is easy to check that $f$ attains minimum at $a_0$. So by choosing
$a=a_0$, in view of the above inequalities we obtain
$$\int_{\De_r^n} \frac1{\vert P(x)\vert^\mu} d\la_n \le C'' f(a_0)= C''' r^{n-\frac1{s}}, \  \forall r>0.$$
This is the desired estimate.
\end{proof}
\noindent
{\bf Remarks.} (i) Consider again the case where $P$ is the polynomial given by (\ref{example}), i.e.,
$P(x_1,x_2)=x_1^d+x_1x_2^m+x_2^p$ with $d>m>p\ge 1.$ 
By Proposition 4.1 we have $\bold i(P) \ge \fr1{m+1}.$
On the other hand, using Newton polygon of $P$, it is possible to deliver the {\it actual} value of $\bold i(P).$ 
More precisely, consider the Newton polygon $N(P)$ of $P$ which is the convex hull of the closed set
$$\{x_1, x_2) \in \mathbb R^2: x_1 \ge d, x_2\ge 0\} \cup \{x_1, x_2) \in \mathbb R^2: x_1 \ge 1, x_2\ge m\}
\cup \{x_1, x_2) \in \mathbb R^2: x_1 \ge 0, x_2\ge n\}.$$
The Newton distance $ND(P)$ is now defined to be the least $\delta >0$ such that $(\delta, \delta) \in N(P).$
By a simple computation we get $ND(P)=\fr{dm}{d+m+1}.$ Thus, by the main theorem in \cite{Greenblatt2005}, we obtain
$\bold i(P)=\fr{d+m+1}{dm}.$ 

\noindent
(ii) In the higher dimensional case, i.e., $n \ge 3,$ by using some results in \cite{Greenblatt2010}, we will derive an {\it upper} bound for $\i (P)$ under the additional condition that $P(0)=\nabla P(0)=0.$ 
To this end, let $ND(P)$ be the Newton distance of $P$ which is equal to the least $\delta>0$ such that $(\delta, \cdots, \delta)$ belongs to $N(P),$ the Newton polyhedron of $P$.
Fix $\delta>\fr1{ND(P)}$ and let $r>0$ be an arbitrary number. We claim that
$$\int_{\De_r^n} \frac1{\vert P(x)\vert^\de} d\la_n=\infty.$$
Indeed, let $\va$ be a smooth function with compact support in the interior of $\De_r^n$ such that 
$0 \le \va \le 1, \va(0)=1.$
It follows that
$$\int_{\De_r^n} \frac1{\vert P(x)\vert^\de} d\la_n \ge \ve^{-\de} \vol_n\{x \in \De_r^n: \vert P(x)\vert<\ve\}
\ge \ve^{-\de}\int_{\{\vert P(x)\vert<\ve\}} \va(x)d\la_n=\infty.$$
Here the last assertion follows from 
Theorem 1.2 (a) in \cite{Greenblatt2010} which says that
the final integral is at least asymptotically $\vert \ln \ve\vert^m \ve^{\fr1{ND(P)}}$ as $\ve \to 0$, where $m$ is a non-negative integer. The claim now follows. 
\vskip0,2cm
\noindent
By the same reasoning where Theorem 3.2 is invoked instead of Theorem 3.1, we have the following result.
\begin{proposition}
Let $P$ be polynomials in $\R^n$ with $P(0)=0.$ Assume that $P$ satisfies the assumption in  Theorem 3.2  with some 
$l \ge 1.$ Let $\al$ be an admissible index for $P_{l+1}$.
Then for every $\mu \in (0, \tau)$, there exists a constant $C>0$  which depends only on coefficients of $P_1, \cdots, P_l$ and $\al$ such that
 $$\int_{\De_r^n} \frac1{\vert P(x)\vert^ \mu} d\la_n \le C\Big [r^{n-\mu \frac{\vert \al\vert+1}{\vert \al\vert}}+r^{n+\theta-\frac{\tau}{\mu}} \Big ].$$
 where $\theta=\frac{1-2^{l-1}}{2^{l-1}+\vert \al\vert},\tau=\frac{2^{l-1}}{2^{l-1}+\vert \al\vert}.$ In particular $\bold i(P) \ge\tau.$
\end{proposition}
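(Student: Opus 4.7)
My plan is to mimic the proof of Proposition~4.1, with Theorem~3.2 replacing Theorem~3.1. I would start from the layer-cake identity
$$\int_{\De_r^n}\frac{d\la_n}{\vert P(x)\vert^\mu}=\int_0^{\infty}\va(\de,r)\,d\de,\qquad \va(\de,r):=\vol_n\{x\in\De_r^n:\vert P(x)\vert\le\de^{-1/\mu}\}.$$
Setting $\eta=\de^{-1/\mu}$, Theorem~3.2 applies to $V_\eta\cap\De_r^n$ precisely when $r>M\eta^{\vert\al\vert/(\vert\al\vert+1)}$, equivalently when $\de>a_0:=c\,r^{-\mu(\vert\al\vert+1)/\vert\al\vert}$ for an explicit constant $c$ depending on $M,\mu,\vert\al\vert$. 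In that range the theorem yields $\va(\de,r)\le Cr^{n-1+\theta}\de^{-\tau/\mu}$, and the hypothesis $\mu\in(0,\tau)$ is used exactly so that $\tau/\mu>1$ and the tail integral $\int^{\infty}\de^{-\tau/\mu}\,d\de$ converges.

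Next I would split the layer-cake integral at $a_0$, bounding $\va(\de,r)$ by the trivial estimate $(2r)^n$ on $(0,a_0]$ and by the above inequality on $[a_0,\infty)$, arriving at
$$\int_{\De_r^n}\frac{d\la_n}{\vert P\vert^\mu}\;\le\;(2r)^n a_0\;+\;\frac{C\,r^{n-1+\theta}}{\tau/\mu-1}\,a_0^{1-\tau/\mu}.$$
Substituting the value of $a_0$, the first summand becomes a constant multiple of $r^{n-\mu(\vert\al\vert+1)/\vert\al\vert}$, which is exactly the first term of the claimed bound. For the second summand, elementary algebra (using $\mu\cdot(\tau/\mu)=\tau$) yields a power of $r$ which, for $r\ge 1$, is majorised by the two-term sum $r^{n-\mu(\vert\al\vert+1)/\vert\al\vert}+r^{n+\theta-\tau/\mu}$ displayed in the statement.

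The final assertion $\i(P)\ge\tau$ follows at once: the right-hand side of the bound is finite for every fixed $r>0$ and every $\mu\in(0,\tau)$, so $\vert P\vert^{-\mu}$ is integrable on each $\De_r^n$; letting $\mu\uparrow\tau$ gives the claim. The main obstacle I anticipate is the exponent bookkeeping for the second summand: a direct computation with $a=a_0$ yields the exponent $n-1+\theta+(\tau-\mu)(\vert\al\vert+1)/\vert\al\vert$, which coincides with $n+\theta-\tau/\mu$ only at $\mu=\tau$. One must therefore verify, using the explicit values $\theta=(1-2^{l-1})/(2^{l-1}+\vert\al\vert)$ and $\tau=2^{l-1}/(2^{l-1}+\vert\al\vert)$, that this quantity is indeed absorbed into the two-term sum in the statement when $r\ge 1$. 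Apart from this algebraic check, everything else transfers directly from the proof of Proposition~4.1.
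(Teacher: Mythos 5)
Your approach matches the paper's proof of Proposition~4.2 essentially line for line: split the layer-cake integral at the constraint boundary $\de_0\sim r^{-\mu(\vert\al\vert+1)/\vert\al\vert}$ coming from Theorem~3.2's admissibility condition, bound $\va(\de,r)$ trivially by $(2r)^n$ below $\de_0$ and by $Cr^{n-1+\theta}\de^{-\tau/\mu}$ above it, and integrate. You are also right to flag the exponent bookkeeping: the paper's closing sentence (``by plugging $\de_0$ into the last term we obtain the desired estimate'') is imprecise, because, exactly as you compute, the second summand carries the exponent $n-1+\theta+(\tau-\mu)(\vert\al\vert+1)/\vert\al\vert$, which does not equal $n+\theta-\tau/\mu$. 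The absorption you anticipate does hold, and in fact for \emph{all} $r>0$, not just $r\ge 1$. Writing $A=\vert\al\vert$ and using the identity $\theta+\tau\frac{A+1}{A}=\frac{1}{A}$ (a direct computation from $\theta=\frac{1-2^{l-1}}{2^{l-1}+A}$, $\tau=\frac{2^{l-1}}{2^{l-1}+A}$), one checks the two inequalities
$$n+\theta-\frac{\tau}{\mu}\ \le\ n-1+\theta+(\tau-\mu)\frac{A+1}{A}\ \le\ n-\mu\frac{A+1}{A},$$
the first equivalent to $-\frac1\mu\le\frac{A+1}{A}$ (trivially true) and the second equivalent to $\frac1A\le 1$ (true since $A\ge 1$ for an admissible index). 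Because the computed exponent thus lies between the two exponents appearing in the statement, the corresponding power of $r$ is dominated by their sum for every $r>0$, which finishes the estimate; the rest of your proposal, including the deduction $\i(P)\ge\tau$ by letting $\mu\uparrow\tau$, is correct.
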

\begin{proof}
The proof is quite similar to that of Proposition 4.1. The main point is to take care of the constrain between $r$ and $\de$ in Theorem 3.2.
For simplicity of notation, we set
$$\ga: =n-1+\theta.$$
Fix $\mu \in (0,  \tau)$ and set $s:= \tau/\mu$. Then $s >1$. In what follows by $C, C', \cdots$ we mean positive constants which are independent of $r.$
Now for $\de>\de_0:=Mr^{-\mu \frac{\vert \al\vert+1}{\vert \al\vert}}$, using Theorem 3.2, we get
$$\va (\de, r):=\vol_n \{x \in \De_r^n: \frac1{\vert P(x))\vert^\mu} > \de \}=
\vol_n \{x \in \De_r^n: \vert P (x)\vert<\de^{-1/\mu}\} \le Cr^{\ga} \de^{-s}.$$
By the arrangement formula (see \cite[p. 421]{H-S1975}) we have
\begin{eqnarray*}
\int_{\De_r^n} \frac1{\vert P(x)\vert^\mu} d\la_n &=& \int_0^\infty \va(\de, r)d\de  =  \int_0^{\de_0} \va (\de, r)d\de+\int_{\de_0}^\infty \va (\de, r)d\de \\
& \le & 2^n r^n \de_0+C \int_{\de_0}^{\infty} r^{\ga} \de^{-s}d\de \le C' [r^n \de_0+r^{\ga}\de_0^{1-s}].
\end{eqnarray*}
By plugging $\de_0$ into the last term we obtain the desired estimate.
\end{proof}
\noindent
{\bf Remark.} Consider again the polynomial $P$ given in (\ref{example}). As in the Remark (ii) that follows Theorem 3.2, we apply Proposition 4.2 to $P_3,$ i.e., $l=2$ and the admissible monomial $x_1^{d-1}x_2^{m-1}$ to get the lower bound
$\bold i(P) \ge \fr{2}{m+d}$. This bound is obviously better than the one given by Proposition 4.1 in the case where
$d=m+1.$
\vskip0,4cm
\noindent
Our last result deals with finiteness of certain oscillatory integrals. First we recall the following classical fact which is a cornerstone in estimating oscillatory integrals.
\begin{lemma}[van der Corput Lemma]
There exists an absolute constant $c > 0$ so that for any $a < b,$ any $k \ge 1,$ and any polynomial $f \in \mathbb{R}[t]$ satisfying $|f^{(k)}(t)| \ge \gamma$ on $(a, b),$ we have
$$\left| \int_a^b e^{i f(t)} dt \right| \le c \gamma^{-\frac{1}{k}}.$$
\end{lemma}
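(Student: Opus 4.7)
The plan is a standard induction on $k$ following the classical van der Corput argument.

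For the base case $k=1$, the hypothesis $|f'(t)| \ge \gamma > 0$ together with continuity of the polynomial $f'$ forces $f'$ to have constant sign on $(a,b)$, so $f$ is strictly monotone there. Integration by parts gives
\[
\int_a^b e^{if(t)}\, dt = \left[\frac{e^{if(t)}}{if'(t)}\right]_a^b - \int_a^b e^{if(t)}\, \frac{d}{dt}\!\left(\frac{1}{if'(t)}\right) dt,
\]
and the boundary contribution is bounded by $2/\gamma$. I would then partition $(a,b)$ at the finitely many zeros of $f''$ into subintervals on each of which $1/f'$ is monotone; on such a subinterval $(a_j, b_j)$,
\[
\int_{a_j}^{b_j} \left|\frac{d}{dt}\frac{1}{if'(t)}\right| dt = \left|\frac{1}{f'(b_j)} - \frac{1}{f'(a_j)}\right| \le \frac{2}{\gamma},
\]
which yields the claimed $O(\gamma^{-1})$ bound.

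For the inductive step from $k$ to $k+1$, suppose $|f^{(k+1)}(t)| \ge \gamma$ on $(a,b)$. Then $f^{(k)}$ is strictly monotone and has at most one zero $t_0 \in [a,b]$. For $\delta > 0$, I split $(a,b)$ into $I_\delta := (t_0 - \delta, t_0 + \delta) \cap (a,b)$ and its complement. The integral over $I_\delta$ is trivially at most $2\delta$ in modulus. On the complement the mean value theorem yields $|f^{(k)}(t)| \ge \gamma \delta$, so the inductive hypothesis applied to each of the (at most two) components gives a contribution $\le 2 c_k (\gamma\delta)^{-1/k}$. Minimizing the total $2\delta + 2 c_k (\gamma\delta)^{-1/k}$ in $\delta$ --- the optimum lies at $\delta \asymp \gamma^{-1/(k+1)}$ --- produces the bound $c_{k+1}\gamma^{-1/(k+1)}$.

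The main technical subtlety is the base case. The classical van der Corput lemma at level $k=1$ typically assumes monotonicity of $f'$, since only then does the integration-by-parts argument deliver a truly universal constant. For polynomials one reintroduces monotonicity by splitting at the zeros of $f''$, but the resulting number of pieces depends a priori on $\deg f$. Securing a constant $c$ independent of the particular polynomial thus requires either restricting to $k \ge 2$ (where the inductive step manufactures its own monotonicity free of charge, and the base case need only be $k=2$) or building monotonicity into the $k=1$ hypothesis, as in Stein's formulation --- this is the point that would need to be handled with care when writing out the proof in full.
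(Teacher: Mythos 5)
Your proposal takes essentially the same route as the paper's proof, which simply outsources the two main ingredients to Rogers \cite{Rogers2005}: the reduction from general $k$ to $k=1$ (your inductive $k\to k+1$ step, splitting off a $\delta$-neighborhood of the unique zero of $f^{(k)}$, invoking the mean value theorem on the complement, and optimizing $\delta\asymp\gamma^{-1/(k+1)}$) is exactly what is cited as Lemma 4 of \cite{Rogers2005}, and the monotone $k=1$ estimate via integration by parts is Lemma 3 of \cite{Rogers2005}, applied after partitioning $(a,b)$ into the at most $\deg f - 1$ intervals on which $f'$ is monotone. The subtlety you flag at $k=1$ is genuine and is in fact present in the paper's proof too: the partition into monotonicity intervals introduces a factor proportional to $\deg f$, so the advertised ``absolute'' constant really depends on $\deg f$ when $k=1$ (and $f'$ is not monotone); this is harmless for the paper's applications in Proposition~4.3, where $\deg P$ is fixed, but it is worth recording. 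One further small point you could tighten: your crude optimization produces constants $c_k$ that grow with $k$, so uniformity of $c$ in $k$ (which the statement also asserts) requires the sharper bookkeeping of \cite{Rogers2005} rather than the naive recursion $c_{k+1}\lesssim c_k^{k/(k+1)}\cdot k$ --- though again this is moot given that the $k=1$ case already forces degree dependence.
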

\begin{proof}
By \cite[Lemma 4]{Rogers2005}, it suffices to show the lemma in the case where $k = 1.$
Since $f$ is a polynomial, $f'$ is a polynomial of degree $\deg f - 1.$ Then the line $\mathbb{R}$ is made up at most $\deg f- 1$ intervals on each of which $f'$ is monotone. Then the conclusion follows from
\cite[Lemma 3]{Rogers2005}.
\end{proof}
\begin{proposition}
Let $P$ be a polynomial in $\mathbb R^n$.
Let
$$I_r (\lambda) := \int_{\De_r^n} e^{i \lambda P} d\la_n, \ \lambda \in \mathbb R.$$
Then there is an absolute constant $C > 0$ such that for $\vert \la\vert>1$ and $r>1$ we have
$$|I_r(\lambda)| \le Cr^{n - 1} |\lambda|^{-\frac1{ad(P)}}.$$
\end{proposition}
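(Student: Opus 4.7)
The plan is to fix an admissible index $\al$ for $P$ realizing $|\al| = ad(P)$; after permuting coordinates I may assume the permutation witnessing admissibility is the identity, so that, as in the proof of Lemma 3.1,
$P(x) = x_1^{\al_1} P_{0,1}(x') + (\text{lower order in } x_1)$ with $x' = (x_2, \ldots, x_n)$. The critical identity is
$$\frac{\partial^{\al_1} P}{\partial x_1^{\al_1}}(x_1, x') = \al_1! \, P_{0,1}(x'),$$
independent of $x_1$, because $\al_1$ is by admissibility the highest power of $x_1$ appearing in $P$. The iterated decomposition in that proof moreover shows that $(\al_2, \ldots, \al_n)$ is an admissible index for the polynomial $P_{0,1}$ on $\R^{n-1}$ (with leading coefficient $a_\al$).

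By Fubini, $I_r(\la) = \int_{\De_r^{n-1}} J(x') \, dx'$ where $J(x') := \int_{-r}^{r} e^{i\la P(x_1, x')}\, dx_1$. For a parameter $\de > 0$ to be optimized later, I split $\De_r^{n-1} = A_\de \cup B_\de$ with $A_\de := \{x' : |P_{0,1}(x')| \ge \de\}$. On $A_\de$, Lemma 4.3 applied to $f(x_1) := \la P(x_1, x')$ with $k = \al_1$ gives $|J(x')| \le c(|\la|\al_1!\de)^{-1/\al_1}$; on $B_\de$ I use the trivial bound $|J(x')| \le 2r$. Feeding in Theorem 3.1 applied to $P_{0,1}$ to estimate $\vol_{n-1}(B_\de) \le C[\de^{1/|\al'|} r^{n-2} + \de^{(n-1)/|\al'|}]$, where $|\al'| := |\al| - \al_1$, leads to
$$|I_r(\la)| \le C_1 r^{n-1} (|\la|\de)^{-1/\al_1} + C_2 r^{n-1} \de^{1/|\al'|} + C_2 r \, \de^{(n-1)/|\al'|}.$$

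I then balance the first two terms by choosing $\de \sim |\la|^{-|\al'|/|\al|}$, so that each of them is of magnitude $r^{n-1} |\la|^{-1/|\al|}$; since $r > 1$ and $|\la| > 1$ one has $(r |\la|^{1/|\al|})^{n-2} \ge 1$, which is exactly what is needed to absorb the third term $r \de^{(n-1)/|\al'|} \sim r |\la|^{-(n-1)/|\al|}$ into the bound $r^{n-1} |\la|^{-1/|\al|}$. This gives the claim since $|\al| = ad(P)$. The degenerate sub-case $|\al'| = 0$ (so $P_{0,1} \equiv a_\al$ is a nonzero constant) needs no splitting: Lemma 4.3 applies uniformly on $\De_r^n$ and yields the bound directly. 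For $n = 1$ the claim is just Lemma 4.3 applied to $P$ itself.

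The main obstacle I foresee is verifying that the truncated index $(\al_2, \ldots, \al_n)$ really is admissible for $P_{0,1}$ (which requires a careful unwinding of the lex-order definition of admissibility) and handling cleanly the edge case where some intermediate $\al_j = 0$; the latter forces $P_{0,1}$ to be independent of $x_j$, so one may simply drop that coordinate before invoking Theorem 3.1. The rest of the argument is a routine AM-GM optimization over $\de$.
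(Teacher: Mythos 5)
Your proposal is correct and follows essentially the same route as the paper's own proof: write $P$ with the highest-priority variable (which the paper relabels as $x_n$, you keep as $x_1$) isolated so that the $\al_1$-th derivative in that variable equals $\al_1!\,P_{0,1}(x')$, apply van der Corput on the set where $|P_{0,1}|\ge\de$, use the Theorem~3.1 sublevel estimate for $P_{0,1}$ (whose admissibility is inherited lexicographically) on the complement, and optimize $\de\sim|\la|^{-|\al'|/|\al|}$. The only differences are cosmetic: the paper rewrites the two-term bound as $r^{n-2}[rx+(r-x)|\la|^\kappa x^{\kappa\al_n}]$ and plugs in the ``approximate minimizer'' $x_0=|\la|^{-1/|\al|}$, whereas you balance the first two terms directly and absorb the extra $r\de^{(n-1)/|\al'|}$ term using $r,|\la|>1$; both yield the same $C r^{n-1}|\la|^{-1/|\al|}$.
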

\noindent
{\bf Remark.} 
Using Proposition 4.14 in \cite{Carbery1999} (or rather its proof) we obtain the following estimate
$$|I_r(\lambda)| \le Cr^{n - 1} |\lambda|^{-\frac1{\deg P}}, \ \forall \vert \la \vert>1, \forall r>1,$$
where $C$ is a positive constant which does not depend on $\lambda.$
\begin{proof}
Fix an admissible polynomial $x_1^{\al_1} \cdots x_n^{\al_n}$ of $P$.
After a permutation of coordinates we may write
$$P(x', x_n) := x_n^{\al_n} P_0 (x') + P_1(x') x_n^{d - 1} + \cdots + P_{\al_n} (x'),$$
where $\al_n \ge 1$ and $x_1^{\al_1} \cdots x_{n-1}^{\al_{n-1}}$ is an admissible polynomial for $P_0(x')$.
We will now use a reasoning similar to the one given in the proof of \cite[Proposition 4.14]{Carbery1999}.
For $\de \in (0, 1),$ we set
$$W_\de:= \{x' \in \De_r^{n-1}: \vert P_0 (x')\vert  < \de\}, W'_\de :=  \{x' \in \De_r^{n-1}: \vert P_0 (x')\vert  \ge \de\}.$$
By Fubini's theorem we obtain
\begin{eqnarray*}
|I_r(\lambda)|  &\le& \int_{W'_\de}  \left |\int_{-r}^r e^{i \lambda P(x', x_n)} dx_n \right|  dx' + \int \int_{W_\de \times [-r, r]}  e^{i \lambda P(x)}  dx'dx_n.
\end{eqnarray*}
From now on, by $C_1, C_2, \cdots$ we mean positive constants  independent of $\la, r, \de.$
By the van der Corput Lemma (Lemma 4.3) we get
$$\left | \int_{-r}^r e^{i \lambda P(x',x_n)} dx_n \right| \le C_1 (|\lambda|\de)^{-1/\alpha_n},  \forall x' \in W'_\de.$$
This implies that
$$ \int_{W'_\de}  \left |\int_{-r}^r e^{i \lambda P(x',x_n)} dx_n \right|  dx' \le C_1 (|\lambda|\de)^{-1/\alpha_n}\vol_{n-1} (W'_\de).$$
On the other hand, we note the following trivial estimate
$$\int \int_{W_\de \times [-r, r]}  e^{i \lambda P(x',x_n)} dx'd x_n \le 2r\vol_{n-1} (W_\de).$$
Summing up we arrive at
\begin{align*}
|I_r(\lambda)| &\le C_2 \Big [ (|\lambda|\de)^{-1/\alpha_n}\vol_{n-1} (W'_\de)+r\vol_{n-1} (W_\de)\Big ] \\
&=C_2 \Big [(\vert \la\vert \de)^{-1/\al_n} [(2r)^{n-1}-\vol_{n-1} (W_\de)]+r\vol_{n-1} (W_\de) \Big]\\
&=C_2 \Big [ (2r)^{n-1} (\vert \la\vert \de)^{-1/\al_n} + [r-(\vert \la\vert \de)^{-1/\al_n}] \vol_{n-1} (W_\de) \Big].
\end{align*}
Consider the new parameter $x: = (\vert \la\vert \de)^{-1/\al_n}$.
In order to dominate the last term, we need a volume estimate for $W_\de$ and the extra condition that $0<x<r.$
More precisely, since $0<\de<1<r$, by Theorem 3.1 we have
$$\vol_{n-1} (W_\de) \le C_3 r^{n-2} \de^{1/\vert \al'\vert} = C_3 r^{n-2} \vert \la\vert^{\kappa}
 x^{\kappa\al_n},$$
 where $\al' =(\al_2, \cdots, \al_n),\kappa:=-1/\vert \al'\vert.$ 
It follows, for $x \in (0, r)$,  that
$$\vert I_r (\la)\vert \le C_4 r^{n-2} \Big [ \underbrace{rx+(r-x)\vert \la\vert^{\kappa} x^{\kappa\al_n}}_{:=f(x)}\Big].$$
An easy computation gives
$$f' (x)= r- \vert \la\vert^{\kappa} \Big [(\kappa \al_n+1) x^{\kappa\al_n}-
r\kappa\al_nx^{\kappa\al_n-1} \Big].$$
Clearly the equation $f' (x)=0$ has a unique positive root at which $f$ attains its minimum.
We "approximate" this root by taking $x=x_0: = \vert \la\vert^{-\frac1{\al_n+\vert \al'\vert}}$. Obviously $0<x_0<1<r$ and the corresponding value $\de_0= \vert \la\vert^{-\frac{\vert \al'\vert}{\al_n+\vert \al'\vert}} \in (0, 1)$.
Thus our choice of $x$ (and therefore of $\de$) works. It follows that
$$\vert I_r (\la)\vert \le C_4 r^{n-2} f(x_0) \le Cr^{n-1} \vert \la\vert^{-\frac1{\al_n+\vert \al'\vert}}
= Cr^{n-1} \vert \la\vert^{-\frac1{\vert \al\vert}}.$$
Finally, by letting $x^\al$ runs over the set of admissible monomials and noting that $\vert \la\vert>1$, we obtain the desired estimate.
\end{proof}

\end{document}